\documentclass{amsart}

\usepackage[T1]{fontenc}
\usepackage{lmodern}
\usepackage{amssymb,mathtools}
\usepackage{microtype}
\usepackage{booktabs}
\usepackage{enumitem}
\usepackage{url}
\usepackage[hidelinks]{hyperref}

\hypersetup{
  pdftitle={Optimal differentiability of isotropic positive definite functions on even-dimensional spheres},
  pdfauthor={Yan Ge},
  pdfsubject={Positive definite functions on spheres and optimal differentiability},
  pdfkeywords={positive definite functions, spheres, Gegenbauer polynomials, differentiability, dimension walks}
}

\numberwithin{equation}{section}

\newtheorem{theorem}{Theorem}[section]
\newtheorem{proposition}[theorem]{Proposition}
\newtheorem{lemma}[theorem]{Lemma}
\newtheorem{corollary}[theorem]{Corollary}
\theoremstyle{remark}

\newcommand{\Nzero}{\mathbb N_0}
\newcommand{\R}{\mathbb R}
\newcommand{\Sph}{\mathbb S}
\newcommand{\dd}{\,\mathrm d}
\newcommand{\abs}[1]{\left\lvert #1\right\rvert}
\newcommand{\norm}[1]{\left\lVert #1\right\rVert}
\newcommand{\ip}[2]{\left\langle #1,#2\right\rangle}
\newcommand{\IS}{\mathcal I_{\!S}}
\newcommand{\Psit}{\widetilde{\Psi}}
\newcommand{\Sd}{\mathcal S}

\begin{document}

\title[Optimal differentiability on even-dimensional spheres]{Optimal differentiability of isotropic positive definite functions\\
on even-dimensional spheres}
\author{Yan Ge}
\address{Department of Mathematics, School of Science, North China University of Technology, Beijing 100144, China}
\email{yge@ncut.edu.cn}
\subjclass[2020]{ 42A82; 33C45; 42C10}
\keywords{Positive definite functions, spheres, Gegenbauer polynomials, Schoenberg coefficients, differentiability, dimension walks}
\date{}

\begin{abstract}
We prove optimality of the differentiability bound for isotropic positive definite functions on every even-dimensional sphere. If the even continuation of such a function on the $d$-dimensional sphere is $2k$ times differentiable at zero, then the function has $2k+\lfloor(d-1)/2\rfloor$ continuous interior derivatives; previously, optimality was known only in odd dimensions. We construct a function on the two-dimensional sphere whose first derivative does not exist at the equator and transfer it to all even dimensions by turning bands and spherical montée. The resulting examples are strictly positive definite, have $2k$ but not $2k+2$ derivatives at zero, and are not positive definite in the next dimension.
\end{abstract}
\maketitle

\section{Introduction}

Isotropic positive definite functions arise as covariance models for spherical random fields and as zonal kernels in approximation theory; see the survey~\cite{Gneiting2013}.

The regularity problem has a Euclidean analogue. A radial positive definite function on $\R^d$ that is $2k$ times differentiable at zero has $2k+\lfloor(d-1)/2\rfloor$ continuous derivatives away from the origin, and this order is optimal~\cite{Gneiting1999}. On spheres, the analogous arguments use the spherical mont\'ee, descente, and turning-bands operators, paralleling dimension walks introduced by Matheron~\cite{Matheron1965,Matheron1972}; see also~\cite{EmeryPeronPorcu2022}.

For an integer $d\ge1$, define
$\Sph^d:=\{x\in\R^{d+1}:\norm{x}=1\}$ and
$\vartheta(x,y):=\arccos\langle x,y\rangle$.
A continuous function $\psi:[0,\pi]\to\R$ belongs to $\Psi_d$ if $\psi(0)=1$ and
\begin{equation}\label{eq:positive-definite-definition}
  \sum_{i=1}^N\sum_{j=1}^N c_i c_j
  \psi\bigl(\vartheta(x_i,x_j)\bigr)\ge0
\end{equation}
for every integer $N\ge1$, every $x_1,\ldots,x_N\in\Sph^d$, and every $c_1,\ldots,c_N\in\R$. The associated isotropic kernel is strictly positive definite if the inequality in \eqref{eq:positive-definite-definition} is strict whenever the points $x_1,\ldots,x_N$ are pairwise distinct and $(c_1,\ldots,c_N)\ne0$. Define
\begin{equation*}
  \Psi_d^+:=\{\psi\in\Psi_d:\psi(\vartheta(x,y))
  \text{ is strictly positive definite on }\Sph^d\}.
\end{equation*}
For an interval $I$ and an integer $r\ge0$, let $C^r(I)$ denote the
space of functions $f:I\to\mathbb R$ for which
$f,f',\ldots,f^{(r)}$ are continuous on $I$.
Here $C^0(I)$ means continuity. The existence of $f^{(r)}(\theta_0)$ means that $f^{(r-1)}$ is defined near $\theta_0$ and has a finite two-sided derivative there. At zero, differentiability of $\psi$ refers to the even continuation
$\psi_e(t):=\psi(|t|)$, $-\pi<t<\pi$.
This convention agrees with Tr\"ubner and Ziegel~\cite[Sec.~1]{TrubnerZiegel2017}.

Ziegel~\cite[Theorem~1.2]{Ziegel2014} proved that every function in $\Psi_d$ has $\lfloor(d-1)/2\rfloor$ continuous derivatives on $(0,\pi)$. Furthermore, Tr\"ubner and Ziegel~\cite[Theorem~1.1]{TrubnerZiegel2017} obtained the following refinement.

\begin{theorem}\label{thm:main-smoothness}
Let $d\ge1$ and $k\ge0$. If $\psi\in\Psi_d$ and $\psi_e$ is $2k$ times differentiable at $0$, then
\begin{equation*}
  \psi\in C^{\,2k+\lfloor(d-1)/2\rfloor}(0,\pi).
\end{equation*}
\end{theorem}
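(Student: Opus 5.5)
We prove Theorem~\ref{thm:main-smoothness} by reducing to Ziegel's base case $k=0$, \cite[Theorem~1.2]{Ziegel2014}, by induction on $k$. The inductive step uses the spherical descente (second-derivative) operator. By Schoenberg's theorem, write
\begin{equation*}
\psi(\theta)=\sum_{n\ge0} b_{n,d}\,\frac{C_n^{(d-1)/2}(\cos\theta)}{C_n^{(d-1)/2}(1)},\qquad b_{n,d}\ge0,\quad \sum_n b_{n,d}=1 .
\end{equation*}
For $d=1$ the Gegenbauer polynomials are replaced by $\cos(n\theta)$.

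\emph{Step 1: translate the regularity hypothesis into a moment condition.} We claim that $\psi_e$ is $2k$ times differentiable at $0$ if and only if $\sum_n b_{n,d}\, n^{2k}<\infty$. Equivalently, the moment condition can be phrased as $\sum_n b_{n,d}\,\lambda_n^k<\infty$ with $\lambda_n=n(n+d-1)$.

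\begin{itemize}
\item The direction ($\Leftarrow$) is easy: the series may be differentiated termwise $2k$ times, and the resulting series converge uniformly.
\item The direction ($\Rightarrow$) is the standard Bochner-type argument. Consider the second symmetric difference
\begin{equation*}
\frac{2\psi(0)-2\psi(h)}{h^2}=\sum_n b_{n,d}\,\frac{2\bigl(1-P_n(\cos h)\bigr)}{h^2},
\end{equation*}
where $P_n$ is the normalized Gegenbauer polynomial. Each term is nonnegative and tends to a multiple of $\lambda_n/d$. Fatou's lemma then gives $\sum_n b_{n,d}\lambda_n<\infty$ whenever $\psi_e''(0)$ exists.
\end{itemize}
Iterating requires applying the same argument to a new positive definite function, which is what Step 2 provides.

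\emph{Step 2: descente.} Assume $k\ge1$. Consider
\begin{equation*}
\varphi:=\frac{\Delta\psi}{\Delta\psi(0)},
\end{equation*}
where $\Delta$ is the zonal Laplace--Beltrami operator
\begin{equation*}
\Delta f=f''+(d-1)\cot\theta\, f'.
\end{equation*}
Its Schoenberg coefficients are proportional to $b_{n,d}\lambda_n\ge0$. Hence $\varphi\in\Psi_d$, with the normalization nonzero unless $\psi\equiv1$, a case that is trivial. By Step 1, $\varphi_e$ is $2(k-1)$ times differentiable at $0$. The induction hypothesis then gives
\begin{equation*}
\varphi\in C^{\,2k-2+m}(0,\pi),\qquad m=\lfloor(d-1)/2\rfloor .
\end{equation*}

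\emph{Step 3: recover $\psi$.} We must show that if $\Delta\psi\in C^{r}(0,\pi)$ with $r=2k-2+m$, then $\psi\in C^{r+2}(0,\pi)$. On $(0,\pi)$, the function $\psi$ is $C^2$ because the series for $\psi'$ and $\psi''$ converge locally uniformly when $\sum b_{n,d}\lambda_n<\infty$. There it solves
\begin{equation*}
(\sin^{d-1}\theta\,\psi')'=\sin^{d-1}\theta\,\Delta\psi .
\end{equation*}
Integrating once gives
\begin{equation*}
\psi'(\theta)=\sin^{1-d}\theta\Bigl(c+\int_{\pi/2}^{\theta}\sin^{d-1}t\,\Delta\psi(t)\,\dd t\Bigr),
\end{equation*}
which lies in $C^{r+1}(0,\pi)$. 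Therefore $\psi\in C^{r+2}(0,\pi)$, completing the induction.

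\emph{Main obstacle.} The delicate point is Step 1 at higher orders. We need that $\psi_e$ being $2k$ times differentiable at $0$ (pointwise, not $C^{2k}$) passes down to $\varphi_e$ being $2k-2$ times differentiable at $0$. The plan is to avoid the differentiated function altogether and work purely with coefficients:

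\begin{itemize}
\item Show directly that the existence of $\psi_e^{(2k)}(0)$ forces $\sum_n b_{n,d} n^{2k}<\infty$. Do this by induction using higher-order symmetric differences, or by a Taylor expansion of $\psi_e$ with remainder $o(h^{2k})$, combined with Fatou.
\item Then $\varphi$ has coefficients satisfying $\sum_n b_{n,d}\lambda_n\, n^{2k-2}<\infty$. By the easy direction of Step 1, $\varphi_e$ is $2k-2$ times differentiable at $0$.
\end{itemize}

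For the Fatou step at higher order, the first approach to try is to subtract the Taylor polynomial, which must have the form $\sum_{j<k}\mu_j h^{2j}$ with finite moments $\mu_j$ known from earlier stages. Then use the sign-definite quantity
\begin{equation*}
(-1)^k\Bigl(P_n(\cos h)-\sum_{j<k}c_{n,j}h^{2j}\Bigr)\ge0,
\end{equation*}
which is a Taylor-remainder property of normalized Gegenbauer polynomials.
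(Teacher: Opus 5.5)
The paper gives no proof of this theorem. It quotes the result from Tr\"ubner--Ziegel \cite[Theorem~1.1]{TrubnerZiegel2017}, and the Introduction indicates that their argument is a dimension walk. In that argument, each pair of derivatives at the pole pays for a descente $\psi\mapsto c\,\psi'/\sin\theta$ from $\Psi_d$ to $\Psi_{d+2}$. Ziegel's theorem is then applied in dimension $d+2k$, giving $k+\lfloor(d-1)/2\rfloor$ interior derivatives, and each of the $k$ integrations back down (mont\'ee) adds one more.

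Your route is genuinely different and, as far as I can check, correct. You never leave dimension $d$, and you use only the fact that $-\Delta$ multiplies the $n$-th Schoenberg coefficient by $n(n+d-1)$. You apply Ziegel's theorem in dimension $d$ to the normalized $\Delta^k\psi$. You then recover two interior derivatives per step from the ODE $(\sin^{d-1}\theta\,\psi')'=\sin^{d-1}\theta\,\Delta\psi$. This avoids the cross-dimensional coefficient formulas for descente and mont\'ee. The dimension-walk proof, in exchange, is built from the same operators that produce the sharpness examples.

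Two steps should be tightened.

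First, your ``main obstacle'' is exactly Theorem~\ref{thm:moment}, which you can simply cite. If you prove it yourself, your sign claim is true for all real $h$. For $d\ge2$, the expansion $C_n^{\lambda}(\cos h)=\sum_{\ell=0}^{n}\frac{(\lambda)_\ell(\lambda)_{n-\ell}}{\ell!\,(n-\ell)!}\cos((n-2\ell)h)$ \cite[\S18.5]{DLMF} gives $R_n^{\lambda}(\cos h)=\sum_\ell w_{n,\ell}\cos((n-2\ell)h)$ with $w_{n,\ell}\ge0$ and $\sum_\ell w_{n,\ell}=1$. The Taylor remainders therefore inherit the alternating sign of the cosine remainders. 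Jensen's inequality then gives $\sum_\ell w_{n,\ell}(n-2\ell)^{2k}\ge\bigl(n(n+d-1)/d\bigr)^k\ge n^{2k}/d^k$, which is the lower bound your Fatou step needs. For $d=1$ there is nothing to prove, since $R_n(\cos h)=\cos(nh)$.

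Second, you assert without proof that the $\psi''$ series converges locally uniformly. It is easier to use $\frac{\dd}{\dd\theta}R_n^{\lambda_d}(\cos\theta)=-\frac{n(n+d-1)}{d}\sin\theta\,R_{n-1}^{\lambda_{d+2}}(\cos\theta)$. With $\beta_n:=b_{n,d}\,n(n+d-1)/d$, this gives $\psi'=-\sin\theta\,G$ where $G=\Sd_{d+2}(T\beta)$. Proposition~\ref{prop:turning-bands}, noting that $\beta_0=0$, then yields $G\in C^1(0,\pi)$ and $\Delta\psi=-d\,\Sd_d(\beta;\cdot)$ on $(0,\pi)$ directly.
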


The order in Theorem~\ref{thm:main-smoothness} is optimal in every odd
dimension; see Ziegel~\cite[Sec.~4]{Ziegel2014} and Tr\"ubner--Ziegel
\cite[Sec.~4]{TrubnerZiegel2017}. Those examples start with a
nondifferentiable function in $\Psi_1$ and use dimension walks that
raise the dimension by two. Hence the even-dimensional problem reduces
to finding a function in $\Psi_2$ that is not continuously
differentiable on $(0,\pi)$; see Ziegel~\cite[Sec.~4.2]{Ziegel2014} and
Tr\"ubner--Ziegel~\cite[Sec.~1]{TrubnerZiegel2017}. The construction of
such a function has remained an open problem.
The main difficulty is pointwise: one must prove that the two-dimensional
example itself fails to be differentiable at an interior point, rather than
merely observe divergence of a formally differentiated series.

We construct a function in $\Psi_2$ whose first derivative does not exist at
the equator. The resulting statement is as follows.

\begin{theorem}[Optimality in even dimensions]\label{thm:main-even}
Let $d=2m\ge2$ and $k\ge0$. There exists a function $\psi_{d,k}\in\Psi_d^+$ such that
\begin{enumerate}[label=\textup{(\roman*)}]
  \item the even continuation of $\psi_{d,k}$ is $2k$ times differentiable at $0$;
  \item $\psi_{d,k}\in C^{2k+m-1}(0,\pi)$;
  \item $\psi_{d,k}^{(2k+m)}(\pi/2)$ does not exist;
  \item the even continuation of $\psi_{d,k}$ is not $2k+2$ times differentiable at $0$;
  \item $\psi_{d,k}\notin\Psi_{d+1}$.
\end{enumerate}
Thus $2k+m-1=2k+\lfloor(d-1)/2\rfloor$ is the largest integer differentiability order guaranteed by the hypothesis at zero.
\end{theorem}

Section~2 recalls Schoenberg expansions and the moment criterion at the origin;
Section~3 records the dimension-walk formulas. Sections~4--6 construct the
two-dimensional seed, transfer it to all even dimensions, and identify the
optimal differentiability index.

\section{Schoenberg expansions and differentiability at the origin}

The classes $\Psi_d$ are characterized by Gegenbauer expansions. We recall this characterization and the moment condition on the coefficients corresponding to differentiability at zero. Throughout, define
$\Nzero:=\{0,1,2,\ldots\}$.

\subsection{The classes \texorpdfstring{$\Psi_d$}{Psi d} and their Schoenberg sequences}

The classes $\Psi_d$ and $\Psi_d^+$ were defined in the Introduction. We
also use the nonnormalized cone
$\Psit_d:=\{a\psi:a\ge0,\ \psi\in\Psi_d\}$.
Thus $f\in\Psit_d$ is a continuous isotropic positive semidefinite function on $\Sph^d$ without the normalization $f(0)=1$. The cone $\Psit_d$ is used in the dimension-walk formulas, where normalization obscures the coefficient shifts.

For $\lambda>0$, let $C_n^\lambda$ be the Gegenbauer polynomial and set
\begin{equation*}
  R_n^\lambda(x):=\frac{C_n^\lambda(x)}{C_n^\lambda(1)},
  \qquad -1\le x\le1.
\end{equation*}
For $d\ge1$, define $\lambda_d:=(d-1)/2$. For $d=1$ we use the
limiting convention $R_n^0(\cos\theta):=\cos(n\theta)$.

The following coefficient characterization is Schoenberg's theorem~\cite{Schoenberg1942}.

\begin{theorem}
A continuous function $\psi:[0,\pi]\to\R$ belongs to $\Psi_d$ if and only if
\begin{equation}\label{eq:Schoenberg-series}
  \psi(\theta)=\sum_{n=0}^\infty b_{n,d}
  R_n^{\lambda_d}(\cos\theta),
  \qquad
  b_{n,d}\ge0,
  \qquad
  \sum_{n=0}^\infty b_{n,d}=1,
\end{equation}
with the cosine convention when $d=1$.
\end{theorem}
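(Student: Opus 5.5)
Sufficiency comes from the addition theorem, necessity from the Funk--Hecke formula, and summability of the coefficients from a positive approximate identity. I would treat $d\ge2$ with $\lambda:=\lambda_d>0$, and then indicate the changes for $d=1$, where everything reduces to Fourier cosine series on the circle.

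\emph{Sufficiency.} Let $\{Y_{n,j}\}_{j=1}^{N(n,d)}$ be a real orthonormal basis of spherical harmonics of degree $n$ on $\Sph^d$. The addition theorem gives
\[
R_n^{\lambda}(\langle x,y\rangle)=\frac{\omega_d}{N(n,d)}\sum_{j}Y_{n,j}(x)Y_{n,j}(y),
\]
where $\omega_d$ is the surface area. Each quadratic form $\sum_{i,l}c_ic_lR_n^\lambda(\langle x_i,x_l\rangle)$ therefore equals a positive multiple of $\sum_j\bigl(\sum_i c_iY_{n,j}(x_i)\bigr)^2\ge0$. Since $|R_n^\lambda|\le1$ on $[-1,1]$ and $\sum b_{n,d}=1$, the series \eqref{eq:Schoenberg-series} converges absolutely and uniformly. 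It thus defines a continuous function with $\psi(0)=1$, and its quadratic forms are nonnegative combinations of nonnegative quantities. Hence $\psi\in\Psi_d$.

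\emph{Necessity.} Let $\psi\in\Psi_d$ and define the Gegenbauer coefficients
\[
b_{n}:=c_{n,\lambda}\int_0^\pi\psi(\theta)R_n^\lambda(\cos\theta)\sin^{2\lambda}\theta\dd\theta,
\]
with $c_{n,\lambda}>0$ chosen so that these are the coefficients of the formal expansion of $\psi$ in the $R_n^\lambda$.
\begin{enumerate}[label=\textup{(\arabic*)}]
\item \textbf{Nonnegativity.} For any spherical harmonic $Y$ of degree $n$, the Funk--Hecke formula gives
\[
\iint\psi(\vartheta(x,y))Y(x)Y(y)\dd\sigma(x)\dd\sigma(y)=\kappa_{n}\, b_{n}\norm{Y}_2^2,\qquad \kappa_n>0.
\]
The left side is a limit of Riemann sums $\sum_{i,l}c_ic_l\psi(\vartheta(x_i,x_l))$ with $c_i=Y(x_i)\,\Delta\sigma_i$, which is legitimate by uniform continuity of $\psi$. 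It is therefore $\ge0$, so $b_n\ge0$.
\item \textbf{Summability.} For $0<r<1$, consider the Poisson integral
\[
\psi_r(x\cdot e)=\int\psi(\vartheta(y,e))P_r(x,y)\dd\sigma(y)=\sum_n b_n r^nR_n^\lambda(x\cdot e).
\]
The Poisson kernel $P_r$ on $\Sph^d$ is positive with total mass one, and it concentrates at $y=x$ as $r\to1$. Continuity of $\psi$ then gives $\psi_r(0)=\sum_n b_nr^n\to\psi(0)=1$. By monotone convergence (all $b_n\ge0$), $\sum_n b_n=1$.
\item \textbf{Identification.} Because $\sum_n b_n<\infty$ and $|R_n^\lambda|\le1$, the series $\sum_n b_nR_n^\lambda(\cos\theta)$ converges uniformly to a continuous function with the same Gegenbauer coefficients as $\psi$. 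Completeness of the Gegenbauer polynomials in $L^2\bigl(\sin^{2\lambda}\theta\dd\theta\bigr)$ gives equality almost everywhere, hence everywhere by continuity.
\end{enumerate}

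For $d=1$, the harmonics are $\cos n\theta$ and $\sin n\theta$, and the addition theorem becomes $\cos n(\alpha-\beta)=\cos n\alpha\cos n\beta+\sin n\alpha\sin n\beta$. The Poisson kernel of the disc plays the role of $P_r$, and the argument is otherwise identical.

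The main obstacle is step (2): upgrading $b_n\ge0$ to $\sum_n b_n=\psi(0)$. This needs a \emph{positive} summability kernel; the naive partial sums $\sum_{n\le N}b_n$ are not known a priori to converge to $\psi(0)$. I would first use the Poisson kernel, whose positivity follows from its closed form $(1-r^2)/(1-2r\langle x,y\rangle+r^2)^{\lambda+1}$ up to a constant. If that causes trouble, the fallback is Cesàro means of order greater than $\lambda+\tfrac12$, for which positivity of the Gegenbauer Cesàro kernel is classical.
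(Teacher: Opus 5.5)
Your proof is correct. The paper takes no route of its own here: it states the result as background and attributes it to Schoenberg~\cite{Schoenberg1942} without proof.

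You instead give the standard self-contained argument:
\begin{enumerate}[label=\textup{(\alph*)}]
\item the addition theorem for sufficiency;
\item Funk--Hecke plus Riemann sums for $b_n\ge0$;
\item Abel--Poisson means with a positive kernel to upgrade this to $\sum_n b_n=\psi(0)=1$;
\item completeness of the Gegenbauer system to identify the uniformly convergent series with $\psi$.
\end{enumerate}
Your version makes the statement independent of the citation, using only tools the paper already relies on. The positive Poisson kernel acting as the multiplier $r^n$ is exactly what the Poisson regularization in Section~4 uses on $\Sph^2$. The citation buys brevity for a classical fact that is not the paper's contribution.

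Two details deserve a sentence each if you write it out.
\begin{enumerate}[label=\textup{(\arabic*)}]
\item The identity $\psi_r(x\cdot e)=\sum_n b_nr^nR_n^\lambda(x\cdot e)$ and the unit mass of $P_r$ both follow from the expansion $P_r(x,y)=\omega_d^{-1}\sum_n N(n,d)\,r^nR_n^\lambda(\langle x,y\rangle)$. This converges uniformly for fixed $r<1$ because $N(n,d)$ grows polynomially, so termwise integration and Funk--Hecke apply. The resulting coefficients match your normalization $c_{n,\lambda}$.
\item The Riemann-sum step needs a partition of $\Sph^d$ into cells of small diameter. It also needs uniform continuity of the whole integrand $(x,y)\mapsto\psi(\vartheta(x,y))Y(x)Y(y)$ on $\Sph^d\times\Sph^d$, not just of $\psi$.
\end{enumerate}
Since the closed form already gives $P_r>0$, the Ces\`aro fallback is unnecessary.
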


The sequence $(b_{n,d})_{n\ge0}$ is called the $d$-Schoenberg sequence of $\psi$. The standard bound for normalized Gegenbauer polynomials~\cite[Eq.~18.14.4]{DLMF}
is $\abs{R_n^\lambda(x)}\le1$ for $\lambda>0$ and $-1\le x\le1$.
For $d=1$, the corresponding estimate is $|\cos(n\theta)|\le1$.
Hence the Weierstrass test gives uniform convergence of
\eqref{eq:Schoenberg-series} on $[0,\pi]$ in every dimension. For
$d\ge2$, the criterion of Chen, Menegatto and
Sun~\cite{ChenMenegattoSun2003} says that $\psi\in\Psi_d^+$ if and only
if positive Schoenberg coefficients occur for infinitely many even and
infinitely many odd degrees.

When a nonnegative summable sequence $\beta=(\beta_n)_{n\ge0}$ is given, we write
\begin{equation*}
  \Sd_d(\beta;\theta)
  :=\sum_{n=0}^\infty \beta_n R_n^{\lambda_d}(\cos\theta),
\end{equation*}
again with the cosine convention for $d=1$. Then
$\Sd_d(\beta;\cdot)\in\Psit_d$ and
$\Sd_d(\beta;0)=\sum_{n=0}^\infty\beta_n$.

\subsection{A moment criterion at the origin}

The following criterion is due to Tr\"ubner and Ziegel~\cite[Lemma~2.1(a)]{TrubnerZiegel2017}. The corresponding Euclidean result is Gneiting~\cite[Lemma~3]{Gneiting1999}; J\"ager~\cite{Jager2019} treats the related Hilbert-sphere moment problem.

\begin{theorem}\label{thm:moment}
Let $d\ge1$, $q\ge1$, and let $\psi\in\Psi_d$ have Schoenberg sequence $(b_{n,d})_{n\ge0}$. The even continuation $\psi_e(t):=\psi(|t|)$ has a finite $2q$-th derivative at $0$---that is, $\psi_e^{(2q-1)}$ is defined near $0$ and has a finite two-sided derivative there---if and only if
\begin{equation}
  \sum_{n=0}^\infty n^{2q}b_{n,d}<\infty.
\end{equation}
\end{theorem}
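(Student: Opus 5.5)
The plan is to prove the equivalence by expanding $\psi$ in the even variable and reducing everything to moments of the Schoenberg sequence. The starting point is that each $R_n^{\lambda}(\cos t)$ is an even trigonometric polynomial in $t$, so its Taylor expansion at $0$ has only even powers. Write
\begin{equation*}
  R_n^{\lambda}(\cos t)=\sum_{j=0}^{n} c_{n,j}\cos(jt).
\end{equation*}
For $\lambda>0$ the coefficients $c_{n,j}$ are nonnegative (Gegenbauer's formula for $C_n^\lambda(\cos t)$ as a nonnegative combination of $\cos((n-2\ell)t)$), and they sum to $1$. For $d=1$ there is only the single term $\cos(nt)$. Two facts follow:
\begin{equation*}
  (-1)^q\frac{\mathrm d^{2q}}{\mathrm dt^{2q}}R_n^{\lambda}(\cos t)\Big|_{t=0}=\sum_j c_{n,j}j^{2q}=:\mu_{n,q}\ge0,
  \qquad
  \mu_{n,q}\asymp n^{2q}\quad(n\to\infty,\ \lambda,q\text{ fixed}).
\end{equation*}
The upper bound is immediate since $j\le n$. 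For the lower bound, $\mu_{n,q}\ge\mu_{n,1}^q$ by Jensen's inequality, and $\mu_{n,1}=-\frac{\mathrm d^2}{\mathrm dt^2}R_n^\lambda(\cos t)|_{0}=R_n^{\lambda\,\prime}(1)=n(n+2\lambda)/(2\lambda+1)$. So $\sum n^{2q}b_{n,d}<\infty$ is equivalent to $\sum_n b_{n,d}\mu_{n,q}<\infty$.

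\emph{Sufficiency.} If $\sum_n b_{n,d}\mu_{n,q}<\infty$, then the series for $\psi_e^{(r)}$, $r\le 2q$, converges uniformly on $\mathbb R$. Indeed, the $r$-th derivative of $\sum_j c_{n,j}\cos(jt)$ is bounded by $\sum_j c_{n,j}j^r\le\mu_{n,q}^{r/(2q)}\le 1+\mu_{n,q}$. Hence $\psi_e\in C^{2q}$ near $0$, and in particular $\psi_e^{(2q)}(0)$ exists.

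\emph{Necessity.} Here I would argue by induction on $q$, using even second differences. Assume $\psi_e^{(2q)}(0)$ exists. Then $\psi_e^{(2q-2)}(0)$ exists, so by the induction hypothesis $\sum b_{n,d}\mu_{n,q-1}<\infty$, and the previous paragraph shows $\psi_e\in C^{2q-2}$ with
\begin{equation*}
  g(t):=(-1)^{q-1}\psi_e^{(2q-2)}(t)=\sum_n b_{n,d}\sum_j c_{n,j}j^{2q-2}\cos(jt),
\end{equation*}
a series with nonnegative weights. Since $g$ is even, $g'(0)=0$, and existence of $g''(0)$ gives
\begin{equation*}
  \frac{2(g(0)-g(t))}{t^2}=\sum_n b_{n,d}\sum_j c_{n,j}j^{2q-2}\,\frac{2(1-\cos jt)}{t^2}\longrightarrow -g''(0).
\end{equation*}
Every term is nonnegative and tends to $c_{n,j}j^{2q}$ as $t\to0$, so Fatou's lemma gives $\sum_n b_{n,d}\mu_{n,q}\le -g''(0)<\infty$. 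The base case $q=1$ is the same argument with $g=\psi_e$.

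The main obstacle is the positivity of the cosine coefficients $c_{n,j}$ together with the lower bound $\mu_{n,q}\gtrsim n^{2q}$; everything else is monotone convergence. For the positivity I would cite the classical expansion $C_n^\lambda(\cos t)=\sum_{\ell=0}^n\frac{(\lambda)_\ell(\lambda)_{n-\ell}}{\ell!(n-\ell)!}\cos((n-2\ell)t)$, whose coefficients are all positive for $\lambda>0$.
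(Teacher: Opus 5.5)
The paper does not prove this statement. It quotes it from Tr\"ubner--Ziegel~\cite[Lemma~2.1(a)]{TrubnerZiegel2017}, with Gneiting's Lemma~3 cited as the Euclidean analogue. So there is no internal proof to match. Your argument is a correct, self-contained proof.

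\textbf{What your argument does.} The key move is to pass to the circle.
\begin{itemize}
\item Gegenbauer's expansion $C_n^\lambda(\cos t)=\sum_{\ell}\frac{(\lambda)_\ell(\lambda)_{n-\ell}}{\ell!(n-\ell)!}\cos((n-2\ell)t)$ writes $\psi_e$ as a cosine series with nonnegative coefficients $\sum_n b_{n,d}c_{n,j}$. These are its $1$-Schoenberg coefficients.
\item The classical characteristic-function argument then applies: symmetric second differences plus Fatou's lemma, with induction on $q$.
\item The two-sided bound $n^{2q}/(2\lambda+1)^q\le\mu_{n,q}\le n^{2q}$ converts circle moments back to $d$-dimensional moments. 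You get it from Jensen's inequality and $R_n^{\lambda\prime}(1)=n(n+2\lambda)/(2\lambda+1)$. The bound also covers $d=1$, where $\lambda=0$ gives $\mu_{n,1}=n^2$.
\end{itemize}

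\textbf{The details check out.}
\begin{itemize}
\item The induction correctly uses the sufficiency direction at level $q-1$ to identify $(-1)^{q-1}\psi_e^{(2q-2)}$ with the termwise-differentiated series.
\item Evenness gives $g'(0)=0$.
\item The Peano or L'H\^opital step giving $2(g(0)-g(t))/t^2\to -g''(0)$ needs only that $g'$ exists near $0$. That is part of the stated hypothesis.
\end{itemize}

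\textbf{A side benefit.} Your sufficiency direction proves more than pointwise existence. Finiteness of the moment gives $\psi_e\in C^{2q}(\R)$ with termwise differentiation. So the pointwise hypothesis at the pole automatically upgrades to $\psi\in C^{2q}[0,\pi]$. The cited lemma is only invoked for the equivalence itself.
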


For $q=0$, the corresponding condition is only $\sum_{n\ge0}b_{n,d}=1$, and ``zero times differentiable'' means continuous.

\section{Dimension walks and transfer of regularity}

Turning bands relates dimensions $d$ and $d+2$, while the spherical mont\'ee relates dimensions $D$ and $D-2$. We record the identities, coefficient formulas, and moment estimates needed below, and prove the two pointwise implications used in Section~\ref{sec:even}.

\subsection{Turning bands}

For a sequence $\beta=(\beta_n)_{n\ge0}$ define the left shift by
$(T\beta)_n:=\beta_{n+1}$, $n\ge0$.
This is the sequence operation denoted by $\beta\circ\tau_{-1}$ in Ziegel~\cite{Ziegel2014} and Tr\"ubner--Ziegel~\cite{TrubnerZiegel2017}.
It preserves nonnegativity. Since
$\sum_{n=0}^\infty(T\beta)_n=\sum_{n=1}^\infty\beta_n
\le\sum_{n=0}^\infty\beta_n$, it also preserves summability.

\begin{proposition}[Turning-bands identity]\label{prop:turning-bands}
Let $d\ge1$ and let $\beta$ be nonnegative and summable. Put
$F(\theta):=\Sd_d(\beta;\theta)$ and
$G(\theta):=\Sd_{d+2}(T\beta;\theta)$.
Then $G\in C^1(0,\pi)$ and, for $0<\theta<\pi$,
\begin{equation}
  F(\theta)
  =\beta_0+\cos\theta\,G(\theta)
    +\frac{\sin\theta}{d}G'(\theta).\label{eq:turning-diff}
\end{equation}
\end{proposition}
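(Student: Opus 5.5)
The plan is to prove the identity term by term at the level of single Gegenbauer polynomials, and then pass to the series. The key polynomial fact is the derivative formula $\frac{d}{dx}C_n^\lambda(x)=2\lambda C_{n-1}^{\lambda+1}(x)$. It is combined with a three-term relation linking $C_n^\lambda$ to $C^{\lambda+1}_{n-1}$ and $xC^{\lambda+1}_{n-1}$ (and, for $d=1$, to Chebyshev polynomials). Set $\lambda=\lambda_d$, so that $\lambda_{d+2}=\lambda+1$ and $d=2\lambda+1$.

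In normalized form, the claim I want is that for $n\ge1$
\begin{equation*}
  R_n^{\lambda}(x)=x\,R_{n-1}^{\lambda+1}(x)-\frac{1-x^2}{2\lambda+1}\,\frac{d}{dx}R_{n-1}^{\lambda+1}(x).
\end{equation*}
Substituting $x=\cos\theta$ turns $-(1-x^2)\frac{d}{dx}$ into $\sin\theta\,\frac{d}{d\theta}$, which gives the per-term version of \eqref{eq:turning-diff}. I would verify the polynomial identity by a standard route: $C_n^\lambda=\frac{\lambda}{n+\lambda}\bigl(C_n^{\lambda+1}-C_{n-2}^{\lambda+1}\bigr)$, together with $(1-x^2)\,\frac{d}{dx}C^{\lambda+1}_{n-1}$ expressed through $C^{\lambda+1}_{n}$ and $C^{\lambda+1}_{n-2}$. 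Two alternatives are also available. One is to check the identity at $x=1$ (both sides equal $1$) and then compare with the Gegenbauer ODE. The other, for $d=1$, is to verify directly that $\cos n\theta=\cos\theta\,\frac{\sin n\theta}{n\sin\theta}+\sin\theta\,\frac{d}{d\theta}\frac{\sin n\theta}{n\sin\theta}$, since $R^1_{n-1}(\cos\theta)=\sin(n\theta)/(n\sin\theta)$; this holds after a one-line computation. Checking the normalization constants, namely $C_n^\lambda(1)=\binom{n+2\lambda-1}{n}$ and the ratio $C^{\lambda}_n(1)/C^{\lambda+1}_{n-1}(1)=(2\lambda)/n\cdot\ldots$, is routine bookkeeping. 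Summing $\beta_n$ times the per-term identity over $n\ge1$ and adding $\beta_0R_0=\beta_0$ then gives \eqref{eq:turning-diff} formally.

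The main obstacle is justifying that $G\in C^1(0,\pi)$ and that $G'$ may be computed termwise, since $\beta$ is only summable. I would differentiate $G$ termwise and show locally uniform convergence on compact subsets of $(0,\pi)$. This needs a bound of the form
\begin{equation*}
  \sup_{\theta\in[\varepsilon,\pi-\varepsilon]}\Bigl|\frac{d}{d\theta}R_{n-1}^{\lambda+1}(\cos\theta)\Bigr|\le C_\varepsilon,
\end{equation*}
uniform in $n$. This is not automatic: the derivative of $R^{\lambda+1}_{n-1}$ is a multiple of $n^2R^{\lambda+2}_{n-2}/(2\lambda+2)$ up to normalization constants. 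Interior decay of the Gegenbauer polynomials, $|R^{\mu}_n(\cos\theta)|\lesssim (n\sin\theta)^{-\mu}$, then gives a bound of order $n^2\cdot n^{-(\lambda+2)}=n^{-\lambda}$, which is bounded in $n$ for $\lambda\ge0$.

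A cleaner way to avoid this estimate is to solve the per-term identity for the derivative:
\begin{equation*}
  \frac{\sin\theta}{d}\frac{d}{d\theta}R^{\lambda+1}_{n-1}(\cos\theta)=R_n^\lambda(\cos\theta)-\cos\theta\, R^{\lambda+1}_{n-1}(\cos\theta).
\end{equation*}
The right-hand side is bounded by $2$ in absolute value. Hence on $[\varepsilon,\pi-\varepsilon]$ the termwise derivatives are bounded by $2d/\sin\varepsilon$, and the differentiated series converges uniformly by the Weierstrass test with weights $\beta_n$. The standard theorem on termwise differentiation then yields $G\in C^1(0,\pi)$ and justifies the termwise formula for $G'$, which completes the proof.
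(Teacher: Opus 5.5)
Your proof is correct, but it takes a different route from the paper. The paper proves nothing directly here. It obtains $G\in C^1(0,\pi)$ by normalizing $G$ (after disposing of the case $G\equiv0$) and invoking Ziegel's interior regularity theorem, which actually gives $G\in C^{\lfloor(d+1)/2\rfloor}(0,\pi)$. It then takes the identity itself from Ziegel's Proposition~4.4 after translating normalizations.

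You instead derive the identity term by term from the normalized relation $R_n^{\lambda}(x)=xR_{n-1}^{\lambda+1}(x)-\frac{1-x^2}{2\lambda+1}\frac{d}{dx}R_{n-1}^{\lambda+1}(x)$, and this relation does hold. Apply the Gegenbauer equation to $u=C_n^\lambda$ and substitute $u'=2\lambda C_{n-1}^{\lambda+1}$. This expresses $u$ as a constant multiple of $xC_{n-1}^{\lambda+1}-\frac{1-x^2}{2\lambda+1}(C_{n-1}^{\lambda+1})'$. The constant is $2\lambda(2\lambda+1)/(n(n+2\lambda))$, and evaluating at $x=1$ fixes it automatically, so the normalization bookkeeping you defer is not actually needed. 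Your Chebyshev computation correctly covers $d=1$.

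Of your two regularity arguments, the second is the right one, and the interior-decay estimate is unnecessary. Reading the per-term identity as the bound $|\frac{d}{d\theta}R_{n-1}^{\lambda+1}(\cos\theta)|\le 2d/\sin\theta$ lets the Weierstrass test run with weights $\beta_n$. So $C^1$ regularity of $G$ and termwise differentiation both come from the same identity. You need no appeal to Ziegel's theorem, no Gegenbauer asymptotics, and no special case $G\equiv0$.

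The paper's citation buys brevity and extra interior smoothness of $G$, neither of which the proposition needs. Your argument buys a self-contained, elementary proof. It also makes explicit that $G'$ is the termwise derivative of the series, which the paper leaves implicit.
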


\begin{proof}
If $G\equiv0$, then $\beta_n=0$ for every $n\ge1$, so $F\equiv\beta_0$
and the assertion is immediate. Otherwise
$G(0)=\sum_{n=1}^{\infty}\beta_n>0$ and $G/G(0)\in\Psi_{d+2}$.
Ziegel's interior regularity theorem~\cite[Theorem~1.2]{Ziegel2014} gives
$G\in C^{\lfloor(d+1)/2\rfloor}(0,\pi)$, and hence $G\in C^1(0,\pi)$.
With $R_n^{\lambda_d}=C_n^{\lambda_d}/C_n^{\lambda_d}(1)$ and
$(T\beta)_n=\beta_{n+1}$, identity~\eqref{eq:turning-diff} is exactly
Ziegel~\cite[Proposition~4.4]{Ziegel2014} in the present normalization.
\end{proof}

\begin{lemma}\label{lem:turning-singularity}
Let $\theta_0\in(0,\pi)$ and $r\ge2$. In the notation of Proposition~\ref{prop:turning-bands}, assume that $G\in C^{r-1}$ in a neighborhood of $\theta_0$. If $F^{(r-1)}(\theta_0)$ does not exist, then $G^{(r)}(\theta_0)$ does not exist.
\end{lemma}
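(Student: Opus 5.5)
We argue by contraposition: assuming $G^{(r)}(\theta_0)$ exists, we show that $F^{(r-1)}(\theta_0)$ exists. The tool is identity~\eqref{eq:turning-diff} of Proposition~\ref{prop:turning-bands}, which expresses $F$ through $G$ and $G'$ with smooth coefficients $\cos\theta$ and $\sin\theta/d$.

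First, fix a neighborhood $U\subset(0,\pi)$ of $\theta_0$ on which $G\in C^{r-1}(U)$. By hypothesis, $G^{(r)}(\theta_0)$ exists. By the convention of the Introduction, this means that $G^{(r-1)}$ is defined near $\theta_0$ and has a finite two-sided derivative at $\theta_0$. Set $H:=G'$. Then $H\in C^{r-2}(U)$, and since $r\ge2$ we have $r-2\ge0$, so $H$ is at least continuous on $U$. Moreover, $H^{(r-1)}(\theta_0)=G^{(r)}(\theta_0)$ exists in the same sense, because $H^{(r-2)}=G^{(r-1)}$ on $U$.

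Next, on $U$ write
\[
F=\beta_0+\cos\theta\,G+\frac{\sin\theta}{d}\,H .
\]
Apply the Leibniz rule up to order $r-2$. This is legitimate because $G$ and $H$ are both in $C^{r-2}(U)$ and the coefficients are $C^\infty$. It gives $F\in C^{r-2}(U)$ and
\[
F^{(r-2)}=\sum_{j=0}^{r-2}\binom{r-2}{j}\Bigl[(\cos)^{(r-2-j)}G^{(j)}+\tfrac1d(\sin)^{(r-2-j)}H^{(j)}\Bigr].
\]
We then differentiate this expression once more at $\theta_0$ and check each term.
\begin{itemize}
\item The terms containing $G^{(j)}$ with $j\le r-2$ are differentiable everywhere on $U$, since $G\in C^{r-1}(U)$.
\item The terms containing $H^{(j)}=G^{(j+1)}$ with $j\le r-3$ are likewise differentiable on $U$.
\item The single remaining term is $\tfrac1d\sin\theta\,H^{(r-2)}=\tfrac1d\sin\theta\,G^{(r-1)}$. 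It is a product of a smooth function with a function differentiable at $\theta_0$, so it is differentiable at $\theta_0$ by the product rule at a point.
\end{itemize}
Hence $F^{(r-2)}$ is defined near $\theta_0$ and has a finite two-sided derivative there. That is, $F^{(r-1)}(\theta_0)$ exists, which contradicts the assumption.

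The only point needing care is bookkeeping with the pointwise convention for existence of derivatives. We must make sure that $F^{(r-2)}$ is genuinely defined on a neighborhood, and not only at $\theta_0$. This holds because $G\in C^{r-1}(U)$ gives $H\in C^{r-2}(U)$, and here $r\ge2$ is needed. Note also that the factor $\sin\theta_0\neq0$ is not needed in this direction; it would matter only for the converse.
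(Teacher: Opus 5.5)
Your proof is correct and takes the same route as the paper. You argue by contradiction from the existence of $G^{(r)}(\theta_0)$, use \eqref{eq:turning-diff} to get $F\in C^{r-2}$ near $\theta_0$, and apply Leibniz' rule once more at $\theta_0$; your version is just more explicit in isolating $\tfrac1d\sin\theta\,G^{(r-1)}$ as the only term that needs pointwise differentiability. One small slip: for $r=2$ your displayed formula for $F^{(r-2)}=F$ should also include the constant $\beta_0$, which is harmless because it vanishes under the final differentiation.
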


\begin{proof}
Assume, to the contrary, that $G^{(r)}(\theta_0)$ exists. Since $G\in C^{r-1}$ near $\theta_0$, equation~\eqref{eq:turning-diff} gives $F\in C^{r-2}$ there. The pointwise existence of $G^{(r)}(\theta_0)$ then permits one further differentiation at $\theta_0$. Since $r-1\ge1$, the constant term $\beta_0$ disappears, and Leibniz' rule gives
\begin{align*}
  F^{(r-1)}(\theta_0)
  &=\sum_{\ell=0}^{r-1}\binom{r-1}{\ell}
    (\cos\theta)^{(r-1-\ell)}\big|_{\theta=\theta_0}
    G^{(\ell)}(\theta_0)\\
  &\quad+\frac1d\sum_{\ell=0}^{r-1}\binom{r-1}{\ell}
    (\sin\theta)^{(r-1-\ell)}\big|_{\theta=\theta_0}
    G^{(\ell+1)}(\theta_0).
\end{align*}
Every derivative on the right exists under the contradictory
assumption. Hence $F^{(r-1)}(\theta_0)$ exists, contrary to the
hypothesis.
\end{proof}

\subsection{The spherical mont\'ee}

Put $A(f):=\int_0^\pi \sin t\,f(t)\dd t$, where $f$ is continuous.
Assume that $A(f)\ne0$ and define
\begin{equation*}
  (\IS f)(\theta)
  :=\frac{\displaystyle\int_\theta^\pi\sin t\,f(t)\dd t}
  {A(f)},\qquad 0\le\theta\le\pi.
\end{equation*}
The normalization $(\IS f)(0)=1$ follows directly from the definition.
Moreover, the fundamental theorem of calculus gives, for
$0<\theta<\pi$,
\begin{equation}\label{eq:montee-derivative}
  (\IS f)'(\theta)=-\frac{\sin\theta}{A(f)}f(\theta).
\end{equation}
The following proposition records the mont\'ee formula and the strict positive definiteness consequence needed below.

\begin{proposition}\label{prop:montee-positive}
Let $D\ge4$. Suppose $f\in\Psi_D$, $f\ge0$, and let $(c_{n,D})_{n\ge0}$ be its $D$-Schoenberg sequence. Then $A(f)>0$, $\IS f\in\Psi_{D-2}$, and for $n\ge1$ its $(D-2)$-Schoenberg coefficients are
\begin{equation}\label{eq:montee-coeff}
  a_{n,D-2}
  =\frac{(D-2)c_{n-1,D}}
  {n(n+D-3)A(f)}.
\end{equation}
If $c_{n,D}>0$ for all sufficiently large $n$, then the Schoenberg coefficients of $\IS f$ are positive for all sufficiently large positive degrees; in particular $\IS f\in\Psi_{D-2}^+$.
\end{proposition}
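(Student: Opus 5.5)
The plan is to establish the three assertions of Proposition~\ref{prop:montee-positive} in turn: positivity of $A(f)$, the Schoenberg expansion of $\IS f$ in dimension $D-2$ with the coefficients \eqref{eq:montee-coeff}, and then positivity of those coefficients.

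\textbf{Positivity of $A(f)$.} Since $f\ge0$ is continuous with $f(0)=1$, it is positive on a neighbourhood of $0$. The integrand $\sin t\,f(t)$ is nonnegative on $[0,\pi]$ and strictly positive on some interval $(0,\varepsilon)$. Hence $A(f)>0$, and $\IS f$ is well defined with $(\IS f)(0)=1$.

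\textbf{Expansion and coefficients.} Write $f(t)=\sum_n c_{n,D}R_n^{\lambda_D}(\cos t)$. This series converges uniformly, so it may be integrated term by term against $\sin t$ on $[\theta,\pi]$. Substituting $x=\cos t$ reduces each term to $\int_{-1}^{\cos\theta}R_n^{\lambda_D}(x)\dd x$. For this I will use the classical Gegenbauer identities
\begin{equation*}
  \frac{d}{dx}C_{n+1}^{\mu}(x)=2\mu\,C_n^{\mu+1}(x),
  \qquad
  C_m^{\mu}(-1)=(-1)^mC_m^{\mu}(1),
\end{equation*}
with $\mu=\lambda_{D-2}=\lambda_D-1$. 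These give
\begin{equation*}
  \int_{-1}^{x}C_n^{\lambda_D}(s)\dd s
  =\frac{C_{n+1}^{\mu}(x)-C_{n+1}^{\mu}(-1)}{2\mu}.
\end{equation*}
Normalizing, each term becomes a constant multiple of $R_{n+1}^{\mu}(\cos\theta)-(-1)^{n+1}$. The constant is $C_{n+1}^{\mu}(1)/\bigl(2\mu\,C_n^{\lambda_D}(1)\bigr)$. Using $C_m^{\mu}(1)=(2\mu)_m/m!$, this simplifies to $(D-2)/\bigl((n+1)(n+D-2)\bigr)$. Reindexing with $n\mapsto n-1$ gives exactly \eqref{eq:montee-coeff} for $n\ge1$, after dividing by $A(f)$.

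The constants $(-1)^{n+1}$ are all absorbed into the degree-zero coefficient. Since $\sum_n c_{n,D}/n^2<\infty$, all rearrangements are justified by absolute convergence. The new coefficients are then nonnegative for $n\ge1$.

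\textbf{Membership in $\Psi_{D-2}$.} It remains to check $a_{0,D-2}\ge0$. This coefficient equals the average of $\IS f$ against the $(D-2)$-dimensional Gegenbauer weight. Because $f\ge0$, the function $\IS f$ is itself nonnegative, so that average is nonnegative. Since $(\IS f)(0)=1$, the coefficients sum to $1$, and Schoenberg's theorem gives $\IS f\in\Psi_{D-2}$.

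\textbf{Strict positive definiteness.} If $c_{n,D}>0$ for all large $n$, then $a_{n,D-2}>0$ for all large $n$. This holds because $D\ge4$ makes the factor $(D-2)/\bigl(n(n+D-3)\bigr)$ positive. In particular, positive coefficients occur in infinitely many even and infinitely many odd degrees. Since $D-2\ge2$, the Chen--Menegatto--Sun criterion then yields $\IS f\in\Psi_{D-2}^+$.

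The main obstacle is the constant bookkeeping in the integration step: normalizing the Gegenbauer constants correctly, and tracking the endpoint values $C_{n+1}^{\mu}(-1)$ so that they land only in the degree-zero coefficient. An alternative shortcut would be to cite Ziegel's mont\'ee proposition directly and only translate normalizations, as was done for Proposition~\ref{prop:turning-bands}.
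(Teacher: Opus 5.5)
Your argument is correct, but for the main step it takes a different route from the paper. The paper proves $A(f)>0$ exactly as you do and finishes with the same Chen--Menegatto--Sun step. However, it obtains both $\IS f\in\Psi_{D-2}$ and formula \eqref{eq:montee-coeff} by citing Tr\"ubner--Ziegel (Proposition~3.3(a),(c)), after identifying their normalizing constant with $A(f)$.

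You instead compute the expansion directly. You integrate term by term and use $\frac{d}{dx}C_{n+1}^{\mu}=2\mu C_n^{\mu+1}$ with $\mu=\lambda_{D-2}=(D-3)/2>0$; this, together with $D-2\ge2$ for the strictness criterion, is where $D\ge4$ is really used. Combined with $C_m^\mu(1)=(2\mu)_m/m!$, this does give the constant $(D-2)/\bigl((n+1)(n+D-2)\bigr)$. You handle the one coefficient the formula does not cover, $a_{0,D-2}$, by observing that it is the $\sin^{D-3}$-weighted mean of the nonnegative function $\IS f$.

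This buys a self-contained proof that shows exactly where $f\ge0$ enters. Without it, $a_{0,D-2}=\frac{D-2}{A(f)}\sum_{n}\frac{(-1)^nc_{n,D}}{(n+1)(n+D-2)}$ can be negative, for example with $f=\epsilon+(1-\epsilon)\cos\theta$ and $\epsilon$ small. It also avoids any normalization mismatch with the cited source. The paper's citation is shorter but hides these checks.

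Two minor fixes. First, the convergence bound you need is $\sum_n c_{n,D}/\bigl((n+1)(n+D-2)\bigr)\le1$, not $\sum_n c_{n,D}/n^2<\infty$, which is undefined at $n=0$. Second, you should state that orthogonality makes the coefficients of a uniformly convergent Gegenbauer series unique. That uniqueness is what identifies your computed numbers with the $(D-2)$-Schoenberg sequence.
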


\begin{proof}
Since $f(0)=1$ and $f$ is continuous, there is a number
$\delta\in(0,\pi)$ such that $f(t)\ge1/2$ for $0\le t\le\delta$.
The nonnegativity of $f$ therefore gives
$A(f)\ge\frac12\int_0^\delta\sin t\,\dd t>0$.
In the notation of Tr\"ubner--Ziegel, their normalizing constant
$G_1(\Nzero)$ is $A(f)$. Thus the membership $\IS f\in\Psi_{D-2}$ and
formula~\eqref{eq:montee-coeff} follow from
Tr\"ubner--Ziegel~\cite[Proposition~3.3(a),(c)]{TrubnerZiegel2017};
see also Beatson--zu Castell~\cite[Theorem~2.2(a)(i),(iii)]{BeatsonZuCastell2017}.
If $c_{n,D}>0$ for every sufficiently large $n$, formula
\eqref{eq:montee-coeff} gives $a_{n,D-2}>0$ for every sufficiently
large positive degree. Since $D-2\ge2$, the criterion of Chen,
Menegatto and Sun~\cite{ChenMenegattoSun2003} then yields
$\IS f\in\Psi_{D-2}^+$.
\end{proof}

The next lemma is the moment form of Tr\"ubner--Ziegel~\cite[Lemma~3.10]{TrubnerZiegel2017}. We include the coefficient argument used later.

\begin{lemma}\label{lem:montee-pole}
Under the assumptions of Proposition~\ref{prop:montee-positive}, let $q\in\Nzero$ and suppose the even continuation of $f$ is $2q$ times differentiable at $0$. Then the even continuation of $\IS f$ is $2q+2$ times differentiable at $0$.
\end{lemma}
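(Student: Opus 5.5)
We argue entirely through the moment criterion, Theorem~\ref{thm:moment}, together with the coefficient formula \eqref{eq:montee-coeff} of Proposition~\ref{prop:montee-positive}. Let $(c_{n,D})_{n\ge0}$ be the $D$-Schoenberg sequence of $f$ and $(a_{n,D-2})_{n\ge0}$ the $(D-2)$-Schoenberg sequence of $\IS f$. The latter is a legitimate Schoenberg sequence because $\IS f\in\Psi_{D-2}$ by Proposition~\ref{prop:montee-positive}. By Theorem~\ref{thm:moment} applied in dimension $D-2\ge2$ with order $q+1\ge1$, it suffices to show
\begin{equation*}
  \sum_{n=0}^\infty n^{2q+2}a_{n,D-2}<\infty .
\end{equation*}

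For the hypothesis on $f$ we split two cases. If $q\ge1$, Theorem~\ref{thm:moment} in dimension $D$ converts $2q$-fold differentiability of $f_e$ at $0$ into $\sum_n n^{2q}c_{n,D}<\infty$. If $q=0$, the hypothesis is just continuity, and the needed bound $\sum_n c_{n,D}=1<\infty$ holds because $f\in\Psi_D$. In either case $\sum_n n^{2q}c_{n,D}<\infty$.

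Next insert \eqref{eq:montee-coeff}. The $n=0$ term contributes nothing. For $n\ge1$,
\begin{equation*}
  n^{2q+2}a_{n,D-2}
  =\frac{(D-2)\,n^{2q+1}}{(n+D-3)A(f)}\,c_{n-1,D}
  \le\frac{D-2}{A(f)}\,n^{2q}c_{n-1,D},
\end{equation*}
using $n+D-3\ge n$ since $D\ge4$, and $A(f)>0$ from Proposition~\ref{prop:montee-positive}. Reindexing with $m=n-1$ and using $(m+1)^{2q}\le 2^{2q}\max(1,m^{2q})$ bounds the series by a constant times $c_{0,D}+\sum_m m^{2q}c_{m,D}<\infty$. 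Theorem~\ref{thm:moment} with $q+1$ then gives the claim.

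There is no serious obstacle. The only points needing care are the degenerate case $q=0$, where Theorem~\ref{thm:moment} is not available for $f$ and we use summability directly, and the index shift $n\mapsto n-1$, where the $m=0$ term must be handled separately.
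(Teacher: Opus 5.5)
Your proposal is correct and follows the paper's proof essentially line by line. You use the same bound $n^{2q+2}a_{n,D-2}\le\frac{D-2}{A(f)}n^{2q}c_{n-1,D}$ obtained from \eqref{eq:montee-coeff} and $n+D-3\ge n$, the same split into $q=0$ (normalization) and $q\ge1$ (Theorem~\ref{thm:moment} in dimension $D$), and the same final appeal to Theorem~\ref{thm:moment} in dimension $D-2$. The only cosmetic difference is in handling the index shift: the paper bounds the shifted moment $S_q=\sum_{n}(n+1)^{2q}c_{n,D}$ via $(n+1)^{2q}\le2^{2q-1}(n^{2q}+1)$, whereas you use $(m+1)^{2q}\le2^{2q}\max(1,m^{2q})$.
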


\begin{proof}
Put $S_q:=\sum_{n=0}^\infty(n+1)^{2q}c_{n,D}$.
If $q=0$, normalization gives $S_0=1$. If $q\ge1$, then
Theorem~\ref{thm:moment} and
$(n+1)^{2q}\le2^{2q-1}(n^{2q}+1)$ give
\begin{align*}
  S_q
  &\le 2^{2q-1}\sum_{n=0}^\infty(n^{2q}+1)c_{n,D}=2^{2q-1}\left(
    \sum_{n=0}^\infty n^{2q}c_{n,D}+1\right)<\infty.
\end{align*}
Using \eqref{eq:montee-coeff} and $n+D-3\ge n$ gives, for $n\ge1$,
$n^{2q+2}a_{n,D-2}=\frac{D-2}{A(f)}
\frac{n^{2q+1}}{n+D-3}c_{n-1,D}
\le \frac{D-2}{A(f)}n^{2q}c_{n-1,D}$.
Consequently, after setting $m:=n-1$,
\begin{equation*}
  \sum_{n=1}^{\infty}n^{2q+2}a_{n,D-2}
  \le \frac{D-2}{A(f)}
      \sum_{m=0}^{\infty}(m+1)^{2q}c_{m,D}
  =\frac{D-2}{A(f)}S_q<\infty.
\end{equation*}
Theorem~\ref{thm:moment} in dimension $D-2$ now gives the conclusion.
\end{proof}

The following implication is the pointwise version of the qualitative relation used in Tr\"ubner--Ziegel~\cite[Eq.~(13)]{TrubnerZiegel2017}.

\begin{lemma}\label{lem:montee-singularity}
Let $f$ be continuous, $A(f)\ne0$, $\theta_0\in(0,\pi)$, and $r\ge1$. If $f^{(r)}(\theta_0)$ does not exist, then $(\IS f)^{(r+1)}(\theta_0)$ does not exist.
\end{lemma}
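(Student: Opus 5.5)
We argue by contraposition, in the same way as Lemma~\ref{lem:turning-singularity}. Suppose $(\IS f)^{(r+1)}(\theta_0)$ exists. By the convention in the Introduction, $(\IS f)^{(r)}$ is then defined on a neighborhood $U$ of $\theta_0$ and has a finite two-sided derivative at $\theta_0$. We may shrink $U$ so that $U\subset(0,\pi)$ and $\sin\theta\ge c>0$ on $U$.

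First we show that $f$ is as regular on $U$ as the derivatives of $\IS f$ allow. Since $f$ is continuous, formula~\eqref{eq:montee-derivative} holds on $U$ and gives
\begin{equation*}
  f(\theta)=-A(f)\,\frac{(\IS f)'(\theta)}{\sin\theta},\qquad \theta\in U.
\end{equation*}
The function $1/\sin\theta$ is $C^\infty$ on $U$. Because $(\IS f)^{(r)}$ exists on $U$, the function $(\IS f)'$ is $r-1$ times differentiable on $U$. Hence $f$ is $r-1$ times differentiable on $U$, and Leibniz' rule gives
\begin{equation*}
  f^{(r-1)}(\theta)=-A(f)\sum_{\ell=0}^{r-1}\binom{r-1}{\ell}
  \Bigl(\frac{1}{\sin\theta}\Bigr)^{(r-1-\ell)}(\IS f)^{(\ell+1)}(\theta),
  \qquad \theta\in U.
\end{equation*}
If $r=1$, this step is trivial: the identity is just the display for $f$ itself.

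Next we differentiate this identity once more at $\theta_0$. For $\ell\le r-2$, the factor $(\IS f)^{(\ell+1)}$ is differentiable on $U$, because $\ell+2\le r$. For $\ell=r-1$, the factor $(\IS f)^{(r)}$ is differentiable at $\theta_0$ by the contradictory assumption. Every coefficient $(1/\sin\theta)^{(j)}$ is smooth on $U$. By the product rule at a single point, each summand is differentiable at $\theta_0$, so $f^{(r)}(\theta_0)$ exists and is finite. This contradicts the hypothesis. Therefore $(\IS f)^{(r+1)}(\theta_0)$ does not exist.

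The only delicate point is bookkeeping. We need $f^{(r-1)}$ to be defined on a whole neighborhood, not just at $\theta_0$, which is what the pointwise definition of $f^{(r)}(\theta_0)$ requires. This follows because $(\IS f)^{(r)}$ exists throughout $U$. The assumptions $\theta_0\in(0,\pi)$ and $A(f)\ne0$ are what allow us to divide by $\sin\theta$ and by $A(f)$.
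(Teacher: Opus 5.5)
Your proposal is correct and follows the paper's proof essentially line by line. You assume $(\IS f)^{(r+1)}(\theta_0)$ exists, write $f=-A(f)(\sin\theta)^{-1}(\IS f)'$ near $\theta_0$ via \eqref{eq:montee-derivative}, use Leibniz' rule to obtain $f^{(r-1)}$ on a whole neighborhood, and differentiate once more at $\theta_0$; your extra bookkeeping (the neighborhood requirement and the case $r=1$) only makes explicit what the paper leaves implicit.
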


\begin{proof}
Write $h:=\IS f$ and suppose that $h^{(r+1)}(\theta_0)$ exists. By the
pointwise convention in the Introduction, $h^{(r)}$ is then defined near
$\theta_0$. Equation~\eqref{eq:montee-derivative} gives there
$f(\theta)=-A(f)(\sin\theta)^{-1}h'(\theta)$. Repeated differentiation
therefore gives, in that neighborhood,
\begin{equation*}
  f^{(r-1)}(\theta)
  =-A(f)\sum_{\ell=0}^{r-1}\binom{r-1}{\ell}
  \left((\sin\theta)^{-1}\right)^{(r-1-\ell)}
  h^{(\ell+1)}(\theta).
\end{equation*}
Since $(\sin\theta)^{-1}$ is smooth near $\theta_0$, differentiating
this identity once more at $\theta_0$ gives
\begin{equation*}
  f^{(r)}(\theta_0)
  =-A(f)\sum_{\ell=0}^{r}\binom r\ell
  \left((\sin\theta)^{-1}\right)^{(r-\ell)}\big|_{\theta=\theta_0}
  h^{(\ell+1)}(\theta_0).
\end{equation*}
Every term on the right exists, contradicting the hypothesis that $f^{(r)}(\theta_0)$ does not exist.
\end{proof}

\section{The two-dimensional example}

Odd-dimensional examples start from a nondifferentiable positive definite
function on the circle~\cite{Ziegel2014,TrubnerZiegel2017}. Since the
dimension walks change dimension by two, we instead construct a seed on
$\Sph^2$. Its sparse Schoenberg sequence is supported on odd degrees for
which the equatorial derivatives have one sign, with coefficients at the
threshold between convergence of the series and divergence of its formal
derivative.

\subsection{The sparse Schoenberg sequence}

Let $P_n:=R_n^{1/2}$ denote the Legendre polynomial normalized by
$P_n(1)=1$; this is the specialization $C_n^{1/2}=P_n$
\cite[Eq.~18.7.9]{DLMF}. Here $P_n'$ denotes differentiation with
respect to the polynomial variable. At the equator,
$\frac{\dd}{\dd\theta}P_n(\cos\theta)|_{\theta=\pi/2}=-P_n'(0)$.
For $n=4j+1$ the numbers $P_n'(0)$ have one sign and order $j^{1/2}$.
Coefficients of order $j^{-\alpha}$ are summable for $\alpha>1$, while
the derivative series diverges for $\alpha\le3/2$; we take the endpoint
$\alpha=3/2$. Lemma~\ref{lem:Legendre-derivative} gives the precise estimates.

For a real sequence $(u_j)$ and a positive sequence $(v_j)$, the notation
$u_j=O(v_j)$ means that $|u_j|/v_j$ is bounded for all sufficiently large
$j$. For $s>1$, define the Riemann zeta function by
$\zeta(s):=\sum_{\ell=1}^\infty \ell^{-s}$.
For $j\in\Nzero$, define
\begin{equation}\label{eq:aj}
  a_j
  :=\frac{(j+1)^{-3/2}}
  {\displaystyle\sum_{\ell=0}^\infty(\ell+1)^{-3/2}}
  =\frac{(j+1)^{-3/2}}{\zeta(3/2)}.
\end{equation}
Since the denominator in \eqref{eq:aj} equals $\zeta(3/2)$, one has $\sum_{j=0}^\infty a_j=1$. Define
\begin{equation*}
  b_n:=
  \begin{cases}
    a_{(n-1)/4},&n\equiv1\pmod4,\\
    0,&\text{otherwise}.
  \end{cases}
\end{equation*}
Then $b_n\ge0$ and $\sum_n b_n=1$. Schoenberg's theorem gives
\begin{equation}\label{eq:phi-seed}
  \phi(\theta):=\sum_{j=0}^\infty a_jP_{4j+1}(\cos\theta)\in\Psi_2.
\end{equation}
Uniform convergence follows from $\abs{P_n(x)}\le1$ on $[-1,1]$~\cite[Eq.~18.14.4]{DLMF}. Formula~\cite[Eq.~18.5.10]{DLMF}, with $\lambda=1/2$, shows that $P_{4j+1}(0)=0$; hence $\phi(\pi/2)=0$.
The same finite expansion gives $P_{4j+1}(-x)=-P_{4j+1}(x)$, and
therefore $\phi(\pi/2+h)=-\phi(\pi/2-h)$. Thus $\phi$ is
antisymmetric about the equator.

\begin{lemma}\label{lem:Legendre-derivative}
For every $j\ge0$,
\begin{equation}\label{eq:Legendre-exact}
  P_{4j+1}'(0)
  =\frac{(4j+1)!}{2^{4j}((2j)!)^2}
  =(4j+1)\frac{\binom{4j}{2j}}{2^{4j}}>0.
\end{equation}
Moreover,
\begin{equation}\label{eq:Legendre-asympt}
  P_{4j+1}'(0)
  =\sqrt{\frac8\pi}\,j^{1/2}\bigl(1+O(j^{-1})\bigr),
  \qquad j\to\infty\text{ through }j\ge1,
\end{equation}
and therefore
\begin{equation}\label{eq:divergent-derivative-sum}
  \sum_{j=0}^\infty a_jP_{4j+1}'(0)=+\infty.
\end{equation}
\end{lemma}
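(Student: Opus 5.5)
The plan has three steps: derive an exact formula for $P_n'(0)$, estimate it with Stirling's formula, and then compare the series with a divergent $p$-series. For the exact formula I would use the explicit finite expansion of the Legendre polynomial,
\begin{equation*}
  P_n(x)=\frac1{2^n}\sum_{k=0}^{\lfloor n/2\rfloor}(-1)^k\binom nk\binom{2n-2k}{n}x^{n-2k},
\end{equation*}
which is \cite[Eq.~18.5.10]{DLMF} with $\lambda=1/2$. It is already normalized, because $C_n^{1/2}=P_n$. Differentiating and setting $x=0$ leaves only the term with $n-2k=1$. For $n=4j+1$ this means $k=2j$, which is even, so the sign $(-1)^{2j}=+1$. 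This gives
\begin{equation*}
  P_{4j+1}'(0)=2^{-(4j+1)}\binom{4j+1}{2j}\binom{4j+2}{4j+1}.
\end{equation*}
Simplifying, we have $\binom{4j+2}{4j+1}=4j+2$ and $\binom{4j+1}{2j}=\frac{(4j+1)!}{(2j)!\,(2j+1)!}$. Then $(4j+2)/(2j+1)=2$, and this factor of $2$ cancels the extra power of $2$. The result is $\frac{(4j+1)!}{2^{4j}((2j)!)^2}=(4j+1)\binom{4j}{2j}2^{-4j}$, which is clearly positive. As a sanity check, $j=0$ gives $P_1'(0)=1$.

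For the asymptotics I would use the standard central binomial estimate $\binom{2N}{N}4^{-N}=(\pi N)^{-1/2}(1+O(N^{-1}))$, which follows from Stirling's formula with remainder, or equivalently from Wallis-type product bounds. Taking $N=2j$ gives
\begin{equation*}
  P_{4j+1}'(0)=(4j+1)(2\pi j)^{-1/2}\bigl(1+O(j^{-1})\bigr).
\end{equation*}
Writing $4j+1=4j(1+O(j^{-1}))$, this becomes $\frac{4j}{\sqrt{2\pi j}}=\sqrt{8/\pi}\,j^{1/2}$ times $1+O(j^{-1})$, which is \eqref{eq:Legendre-asympt}. The only care needed is to cite or derive the $O(N^{-1})$ remainder in the central binomial estimate. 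A bare $\sim$ would not be enough for the stated error term. Since all later uses need only the leading order, I expect this step to be routine rather than a real obstacle.

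Divergence then follows by comparison, since every term of the series is nonnegative. By \eqref{eq:Legendre-asympt} there is a $J$ with $P_{4j+1}'(0)\ge\frac12\sqrt{8/\pi}\,j^{1/2}$ for $j\ge J$. Hence
\begin{equation*}
  a_jP_{4j+1}'(0)\ge c\,(j+1)^{-3/2}j^{1/2}\ge c'\,(j+1)^{-1}
\end{equation*}
for $j\ge\max(J,1)$ and some constants $c,c'>0$. The harmonic series diverges, so the sum in \eqref{eq:divergent-derivative-sum} is $+\infty$.
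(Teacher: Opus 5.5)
Your proposal is correct and follows essentially the same route as the paper: the finite power series \cite[Eq.~18.5.10]{DLMF} at $\lambda=1/2$ (your two-binomial form is just its Pochhammer form rewritten), the Stirling estimate for $\binom{4j}{2j}$ with an $O(j^{-1})$ remainder, and comparison with the harmonic series. The only cosmetic difference is that you use a one-sided lower bound for $a_jP_{4j+1}'(0)$, whereas the paper writes out its full asymptotic $\frac{1}{\zeta(3/2)}\sqrt{8/\pi}\,(1+O(j^{-1}))/j$.
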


\begin{proof}
The finite power series for the Gegenbauer polynomials~\cite[Eq.~18.5.10]{DLMF}, specialized to $\lambda=1/2$, gives
\begin{equation*}
  P_{2\ell+1}'(0)=(-1)^\ell
  \frac{(2\ell+1)!}{2^{2\ell}(\ell!)^2}.
\end{equation*}
Taking $\ell=2j$ proves \eqref{eq:Legendre-exact}. Stirling's formula~\cite[Eq.~5.11.3]{DLMF} gives
\begin{equation*}
  \binom{4j}{2j}
  =\frac{2^{4j}}{\sqrt{2\pi j}}
  \bigl(1+O(j^{-1})\bigr)
\end{equation*}
and hence \eqref{eq:Legendre-asympt}. Combining this asymptotic with \eqref{eq:aj} gives
\begin{equation*}
  a_jP_{4j+1}'(0)
  =\frac{1}{\zeta(3/2)}\sqrt{\frac8\pi}\,
  \frac{1+O(j^{-1})}{j},
  \qquad j\to\infty,
\end{equation*}
so \eqref{eq:divergent-derivative-sum} follows from divergence of the harmonic series.
\end{proof}

\subsection{Poisson regularization}

Fix $e\in\Sph^2$ and define the zonal function
$F(\xi):=\phi(\arccos\ip{e}{\xi})$, $\xi\in\Sph^2$.
Let $\dd\sigma$ denote standard surface-area measure on $\Sph^2$, so that
$\sigma(\Sph^2)=4\pi$. For $0<r<1$, let
\begin{equation}\label{eq:Poisson-extension}
  U(r,\xi):=\int_{\Sph^2}P_r(\xi,\eta)F(\eta)\dd\sigma(\eta),
  \qquad
  P_r(\xi,\eta):=\frac{1-r^2}{4\pi\abs{r\xi-\eta}^3}.
\end{equation}
For fixed $e$, the function $\xi\mapsto P_n(\langle e,\xi\rangle)$ is a zonal spherical harmonic of degree $n$. The Poisson formula in \eqref{eq:Poisson-extension} and its action as the multiplier $r^n$ on spherical harmonics of degree $n$ follow from Dai and Xu~\cite[Definition~2.2.3 and Lemma~2.2.4]{DaiXu2013}, after accounting for our use of unnormalized surface-area measure. Since $P_r(\xi,\cdot)\in L^1(\dd\sigma)$ for each fixed $r<1$, uniform convergence of \eqref{eq:phi-seed} permits termwise Poisson integration. Hence the one-variable part of $U$ is
\begin{equation*}
  \phi_r(\theta)
  :=\sum_{j=0}^\infty a_jr^{4j+1}P_{4j+1}(\cos\theta).
\end{equation*}
Thus
\begin{equation}\label{eq:U-zonal}
  U(r,\xi)=\phi_r\bigl(\arccos\ip{e}{\xi}\bigr).
\end{equation}
Since $\theta\mapsto P_{4j+1}(\cos\theta)$ is a trigonometric polynomial of degree $4j+1$ and has supremum norm at most one, Bernstein's inequality~\cite[Theorem~4.1.3]{DaiXu2013} gives
\begin{equation*}
  \sup_{0\le\theta\le\pi}
  \left|\frac{\dd}{\dd\theta}P_{4j+1}(\cos\theta)\right|
  \le4j+1.
\end{equation*}
Since $\sum_j a_j(4j+1)r^{4j+1}<\infty$, the differentiated series
converges uniformly. In particular, for $0<r<1$,
\begin{equation*}
  \phi_r'\left(\frac\pi2\right)
  =-\sum_{j=0}^{\infty}a_jr^{4j+1}P_{4j+1}'(0).
\end{equation*}
Every term in the last sum is nonnegative and increases to
its value at $r=1$. Thus monotone convergence and
Lemma~\ref{lem:Legendre-derivative} give
\begin{align*}
  \lim_{r\to1^-}\left(-\phi_r'\left(\frac\pi2\right)\right)
  &=\lim_{r\to1^-}\sum_{j=0}^\infty
    a_jr^{4j+1}P_{4j+1}'(0)\\
  &=\sum_{j=0}^\infty a_jP_{4j+1}'(0)=+\infty.
\end{align*}
Equivalently,
\begin{equation}\label{eq:phi-r-minus-infty}
  \lim_{r\to1^-}\phi_r'\left(\frac\pi2\right)=-\infty.
\end{equation}

To pass from \eqref{eq:phi-r-minus-infty} to the original boundary function, we use the following local statement for spherical Poisson integrals.

For $\xi_0\in\Sph^2$, define the tangent space and geodesic distance by
\begin{equation*}
  T_{\xi_0}\Sph^2:=\{v\in\R^3:\ip{v}{\xi_0}=0\},
  \qquad
  \operatorname{dist}(\eta,\xi_0):=\arccos\ip{\eta}{\xi_0}.
\end{equation*}
If $v\in T_{\xi_0}\Sph^2\setminus\{0\}$, define the great-circle curve
\begin{equation*}
  \gamma_v(s):=\cos(s\norm v)\xi_0
  +\sin(s\norm v)\frac{v}{\norm v}
\end{equation*}
and, whenever it exists, the tangential derivative
\begin{equation*}
  D_vG(\xi_0):=\frac{\dd}{\dd s}G(\gamma_v(s))\bigg|_{s=0}.
\end{equation*}
Set $D_0G(\xi_0):=0$.

For $\eta$ near $\xi_0$, put $t:=\operatorname{dist}(\eta,\xi_0)$.
When $t>0$, there is a unique unit vector $u\in T_{\xi_0}\Sph^2$ such
that $\eta=\cos t\,\xi_0+\sin t\,u$.
We say that $G$ is differentiable at $\xi_0$ if there is a vector $b\in T_{\xi_0}\Sph^2$ for which
\begin{equation*}
  G(\eta)=G(\xi_0)+t\ip{b}{u}+o(t)
  \qquad\text{as }\eta\to\xi_0,
\end{equation*}
where the little-$o$ is uniform over all approach directions $u$.
In that case $D_vG(\xi_0)=\ip{b}{v}$ for every $v\in T_{\xi_0}\Sph^2$.

\begin{lemma}\label{lem:Poisson-boundary}
Let $G\in C(\Sph^2)$ and
\begin{equation*}
  V(r,\xi):=\int_{\Sph^2}P_r(\xi,\eta)G(\eta)\dd\sigma(\eta).
\end{equation*}
Fix $\xi_0\in\Sph^2$. If $G$ is differentiable at $\xi_0$ as a function on the manifold, then for every $v\in T_{\xi_0}\Sph^2$,
\begin{equation}\label{eq:Poisson-boundary-convergence}
  \lim_{r\to1^-}D_vV(r,\xi_0)=D_vG(\xi_0).
\end{equation}
\end{lemma}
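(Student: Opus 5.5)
The plan is to reduce the statement to an approximate-identity property of a derivative kernel, using rotation invariance of the Poisson kernel. Fix $v\in T_{\xi_0}\Sph^2$ with $\norm v=1$ (the case $v=0$ is trivial, and the general case follows by homogeneity, since $\gamma_{cv}(s)=\gamma_v(cs)$). Let $\rho_s$ be the rotation about the axis $\xi_0\times v$ with $\rho_s\xi_0=\gamma_v(s)$. Rotation invariance gives $P_r(\rho_s\xi_0,\eta)=P_r(\xi_0,\rho_s^{-1}\eta)$. For fixed $r<1$ the kernel is smooth in $\xi$, so differentiation under the integral is allowed, and we obtain
\begin{equation*}
  D_vV(r,\xi_0)=\int_{\Sph^2}K_r(\eta)\,G(\eta)\dd\sigma(\eta),
  \qquad
  K_r(\eta):=\frac{\dd}{\dd s}P_r(\gamma_v(s),\eta)\Big|_{s=0}.
\end{equation*}
A direct computation from $\abs{r\xi-\eta}^2=1+r^2-2r\ip{\xi}{\eta}$ gives
\begin{equation*}
  K_r(\eta)=\frac{3r(1-r^2)\ip{v}{\eta}}{4\pi\abs{r\xi_0-\eta}^5}.
\end{equation*}
This kernel is odd under the reflection $\eta\mapsto\eta-2\ip{v}{\eta}v$. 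Hence $\int K_r\dd\sigma=0$, and more generally $K_r$ annihilates every function that is even in the $v$-direction.

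Next I insert the expansion supplied by differentiability. Write $\eta=\cos t\,\xi_0+\sin t\,u$ and
\begin{equation*}
  G(\eta)=G(\xi_0)+\ip{b}{\eta}+E(\eta).
\end{equation*}
This form works because $\ip{b}{\eta}=\sin t\,\ip bu$, which equals $t\ip bu+O(t^3)$. Consequently $E(\eta)=o(t)$ near $\xi_0$, and $E$ is bounded everywhere because $G$ is continuous. The constant term integrates to zero against $K_r$. The linear term $\ip{b}{\eta}$ is a degree-one spherical harmonic, so its Poisson integral is $r\ip{b}{\xi}$. Its $D_v$-derivative at $\xi_0$ is therefore $r\ip bv$, which tends to $\ip bv=D_vG(\xi_0)$. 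It remains to show
\begin{equation*}
  \lim_{r\to1^-}\int_{\Sph^2}K_r(\eta)\,E(\eta)\dd\sigma(\eta)=0.
\end{equation*}

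This last limit is the main obstacle. It needs a bound of the form $\int\abs{K_r(\eta)}\,t(\eta)\dd\sigma(\eta)\le C$ uniformly in $r$. Using $\abs{\ip v\eta}\le\sin t\le t$ and
\begin{equation*}
  \abs{r\xi_0-\eta}^2=(1-r)^2+4r\sin^2(t/2)\ge c\bigl((1-r)^2+t^2\bigr)
\end{equation*}
for $r\ge1/2$, and setting $\epsilon:=1-r$, we get
\begin{equation*}
  \abs{K_r}\,t\le C\frac{\epsilon\,t^2}{(\epsilon^2+t^2)^{5/2}}.
\end{equation*}
In polar coordinates ($\dd\sigma=\sin t\dd t\dd\varphi$, with $\sin t\le t$), the integral over a neighbourhood $t<\delta$ is bounded by
\begin{equation*}
  C\int_0^\infty\frac{\epsilon\,t^3}{(\epsilon^2+t^2)^{5/2}}\dd t=C',
\end{equation*}
by scaling $t=\epsilon\tau$; the bound is independent of $\epsilon$. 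Given $\varepsilon'>0$, I then choose $\delta$ such that $\abs{E}\le\varepsilon' t$ for $t<\delta$. The near part of the integral is then at most $C'\varepsilon'$. On the far part $t\ge\delta$ we have $\abs{K_r}\le C\epsilon/\delta^5\to0$ while $E$ is bounded, so the far part tends to zero. Letting $r\to1^-$ and then $\varepsilon'\to0$ gives the claim. Combined with the first two steps, this yields \eqref{eq:Poisson-boundary-convergence}.
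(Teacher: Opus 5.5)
Your proposal is correct and follows essentially the same route as the paper. Both arguments subtract the affine part $G(\xi_0)+\langle b,\eta\rangle$, using the Poisson multiplier $r$ for degree-one harmonics, and both compute the same differentiated kernel. Both then bound that kernel by $C\varepsilon t/(\varepsilon^2+t^2)^{5/2}$ and handle the remainder by the same near/far split with the scaling $t=\varepsilon s$. The only differences are cosmetic. You kill the constant term by the oddness of $K_r$ under reflection in $v$, whereas the paper uses the multiplier $1$ for constants. Your rotation-invariance remark is not actually needed.
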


\begin{proof}
For fixed $r<1$, the inequality $|r\xi-\eta|\ge1-r$ holds for
$\xi,\eta\in\Sph^2$. If $w\in T_\xi\Sph^2$, direct differentiation
therefore gives the uniform bounds
\begin{align*}
  |P_r(\xi,\eta)|
  &\le\frac{1-r^2}{4\pi(1-r)^3},&
  |D_wP_r(\xi,\eta)|
  &\le\frac{3r(1-r^2)\norm w}{4\pi(1-r)^5}.
\end{align*}
The functions multiplied by these kernels below are bounded on the
compact sphere. Thus differentiation under the integral is justified
for every fixed $r<1$.

\smallskip
\noindent\emph{Step 1: Reduction to a remainder vanishing to first order.}
Let $b\in T_{\xi_0}\Sph^2$ represent the differential of $G$ at
$\xi_0$, and put $\ell(\eta):=G(\xi_0)+\ip{b}{\eta}$ and
$H(\eta):=G(\eta)-\ell(\eta)$. If
$\eta=\cos t\,\xi_0+\sin t\,u$, then $\ip{b}{\xi_0}=0$ and
$\ell(\eta)=G(\xi_0)+\sin t\,\ip{b}{u}$. Subtracting this identity
from the differentiability expansion of $G$ gives
\begin{align*}
  H(\eta)
  &=\bigl(G(\eta)-G(\xi_0)-t\ip{b}{u}\bigr)
    +(t-\sin t)\ip{b}{u}\\
  &=o(t)+O(t^3)=o(t).
\end{align*}
In particular,
\begin{equation}\label{eq:H-little-o}
  H(\xi_0)=0,
  \qquad
  \frac{|H(\eta)|}{\operatorname{dist}(\eta,\xi_0)}\longrightarrow0
  \quad\text{as }\eta\to\xi_0.
\end{equation}
Constants and degree-one spherical harmonics have Poisson multipliers
$1$ and $r$, respectively~\cite[Sec.~2.2]{DaiXu2013}. The Poisson
extension of $\ell$ is therefore $G(\xi_0)+r\ip{b}{\xi}$. Define
\begin{equation*}
  V_H(r,\xi):=\int_{\Sph^2}P_r(\xi,\eta)H(\eta)\dd\sigma(\eta).
\end{equation*}
Then
\begin{equation*}
  D_vV(r,\xi_0)=r\ip{b}{v}+D_vV_H(r,\xi_0).
\end{equation*}
The first term tends to $\ip{b}{v}=D_vG(\xi_0)$. It remains to prove
that $D_vV_H(r,\xi_0)$ tends to zero.

\smallskip
\noindent\emph{Step 2: Estimate for the differentiated kernel.}
First suppose that $\norm v=1$. Since $\ip{\xi_0}{v}=0$, direct differentiation of \eqref{eq:Poisson-extension} gives
\begin{equation}\label{eq:Poisson-tangent-derivative}
  D_vP_r(\xi_0,\eta)
  =\frac{3r(1-r^2)}{4\pi}
  \frac{\ip{\eta}{v}}{\abs{r\xi_0-\eta}^5}.
\end{equation}
Set $\varepsilon:=1-r$ and
$t:=\operatorname{dist}(\eta,\xi_0)$. For $r\ge1/2$ and
$0\le t\le1$, one has
\begin{align*}
  |\ip{\eta}{v}|&\le\sin t\le t,\\
  |r\xi_0-\eta|^2
  &=(1-r)^2+2r(1-\cos t)
  \ge\varepsilon^2+\frac{2}{\pi^2}t^2.
\end{align*}
Here we used $1-\cos t\ge2t^2/\pi^2$ for $0\le t\le\pi$. In the estimates below, $C>0$ denotes an absolute constant whose value may change from line to line. Since $1-r^2\le2\varepsilon$, it follows that
\begin{equation*}
  \abs{D_vP_r(\xi_0,\eta)}
  \le C\frac{\varepsilon t}{(\varepsilon^2+t^2)^{5/2}}
\end{equation*}
whenever $r\ge1/2$ and $0\le t\le1$.

\smallskip
\noindent\emph{Step 3: The near and far regions.}
Let $\tau>0$. By \eqref{eq:H-little-o}, choose $\delta\in(0,1)$ so
that $|H(\eta)|\le\tau t$ whenever $0\le t\le\delta$.
In geodesic polar coordinates about $\xi_0$, let $\varphi\in[0,2\pi)$ be the angular variable, so that $\dd\sigma=\sin t\,\dd t\,\dd\varphi$. Using $|H(\eta)|\le\tau t$ and $\sin t\le t$, and absorbing the angular factor $2\pi$ into $C$, the contribution from the cap $0\le t\le\delta$ is bounded by
\begin{align*}
  &\int_{\{t\le\delta\}}
  |D_vP_r(\xi_0,\eta)|\,|H(\eta)|\dd\sigma(\eta)\\
  &\qquad\le C\tau\varepsilon\int_0^\delta
  \frac{t^3}{(\varepsilon^2+t^2)^{5/2}}\dd t\\
  &\qquad= C\tau\int_0^{\delta/\varepsilon}
  \frac{s^3}{(1+s^2)^{5/2}}\dd s\\
  &\qquad\le C\tau\int_0^\infty
  \frac{s^3}{(1+s^2)^{5/2}}\dd s
  =\frac{2C}{3}\tau,
\end{align*}
where the substitution $t:=\varepsilon s$ was used.

For $t\ge\delta$ and $r\ge1/2$,
\begin{equation*}
  |r\xi_0-\eta|^2
  \ge2r(1-\cos\delta)
  \ge1-\cos\delta.
\end{equation*}
Put $\norm{H}_\infty:=\max_{\eta\in\Sph^2}|H(\eta)|$.
Equation~\eqref{eq:Poisson-tangent-derivative},
$|\ip{\eta}{v}|\le1$, and $1-r^2\le2(1-r)$ give
\begin{align*}
  &\int_{\{t\ge\delta\}}
  |D_vP_r(\xi_0,\eta)|\,|H(\eta)|\dd\sigma(\eta)\\
  &\qquad\le
  \frac{3(1-r)}
  {2\pi(1-\cos\delta)^{5/2}}
  \norm{H}_\infty\sigma(\Sph^2).
\end{align*}
Hence, for the fixed $\tau$, there exists $r_\tau\in(1/2,1)$ such
that the far contribution is at most $\tau$ whenever $r_\tau<r<1$.
Combining the two estimates gives
\begin{equation*}
  |D_vV_H(r,\xi_0)|
  \le\left(1+\frac{2C}{3}\right)\tau,
  \qquad r_\tau<r<1.
\end{equation*}
To verify the limit directly, let $\rho>0$ and take
$\tau:=\rho/(1+2C/3)$. Then
\begin{equation*}
  r_\tau<r<1
  \quad\Longrightarrow\quad
  |D_vV_H(r,\xi_0)|\le\rho,
\end{equation*}
so $D_vV_H(r,\xi_0)\to0$ as $r\to1^-$. For a general nonzero tangent
vector, linearity gives
\begin{equation*}
  D_vV_H(r,\xi_0)
  =\norm v\,D_{v/\norm v}V_H(r,\xi_0)\longrightarrow0.
\end{equation*}
The case $v=0$ holds by definition. Adding back the affine part proves
\eqref{eq:Poisson-boundary-convergence}.
\end{proof}

\begin{proposition}[The two-dimensional example]\label{prop:d2-seed}
The function $\phi$ in \eqref{eq:phi-seed} belongs to $\Psi_2$, but $\phi'(\pi/2)$ does not exist.
\end{proposition}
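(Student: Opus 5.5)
Membership $\phi\in\Psi_2$ was already recorded after \eqref{eq:phi-seed} via Schoenberg's theorem, since $b_n\ge0$ and $\sum_n b_n=1$. So the whole task is the nondifferentiability at $\pi/2$, and the plan is a proof by contradiction: assume $\phi'(\pi/2)$ exists, transfer this to differentiability of the zonal function $F$ on $\Sph^2$ at an equatorial point, and then contradict \eqref{eq:phi-r-minus-infty} using Lemma~\ref{lem:Poisson-boundary}.

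\emph{Step 1: from $\phi'(\pi/2)$ to differentiability of $F$.} Fix $\xi_0\in\Sph^2$ with $\ip{e}{\xi_0}=0$. Near $\xi_0$, write $F(\eta)=\phi(\arccos\ip{e}{\eta})$. Set $g(x):=\phi(\arccos x)$ for $x$ near $0$. Since $\arccos$ is smooth with derivative $-1$ at $0$, the chain rule for a one-sided-free pointwise derivative gives $g(x)=\phi(\pi/2)-\phi'(\pi/2)\,x\cdot(-1)\cdot(-1)+o(x)$; more precisely $g(x)=g(0)-\phi'(\pi/2)x+o(x)$. For $\eta=\cos t\,\xi_0+\sin t\,u$ we have $x=\ip{e}{\eta}=\sin t\,\ip{e}{u}$. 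Hence $|x|\le t$, and
\begin{equation*}
F(\eta)=F(\xi_0)-\phi'(\pi/2)\,t\,\ip{e}{u}+O(t^3)+o(t),
\end{equation*}
with the $o(t)$ uniform in $u$ because it depends only on $|x|\le t$. Since $\ip{e}{\xi_0}=0$, the vector $b:=-\phi'(\pi/2)e$ lies in $T_{\xi_0}\Sph^2$. Thus $F$ is differentiable at $\xi_0$ in the manifold sense, with $D_vF(\xi_0)=-\phi'(\pi/2)\ip{e}{v}$.

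\emph{Step 2: apply the Poisson lemma.} Apply Lemma~\ref{lem:Poisson-boundary} with $G=F$ and $v=e\in T_{\xi_0}\Sph^2$. Then $D_eU(r,\xi_0)\to-\phi'(\pi/2)$, which is finite. Next compute $D_eU(r,\xi_0)$ from \eqref{eq:U-zonal}. Along $\gamma_e(s)=\cos s\,\xi_0+\sin s\,e$ we have $\ip{e}{\gamma_e(s)}=\sin s$, so $\arccos\ip{e}{\gamma_e(s)}=\pi/2-s$. Since $\phi_r$ is $C^1$ (uniform convergence of the differentiated series), this gives $D_eU(r,\xi_0)=-\phi_r'(\pi/2)$. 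By \eqref{eq:phi-r-minus-infty} this tends to $+\infty$, which is a contradiction.

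The main obstacle is Step 1, specifically checking that the little-$o$ is uniform over approach directions. The key point is that the error depends on $\eta$ only through the scalar $x=\sin t\,\ip{e}{u}$, which is bounded by $t$. I would state the one-variable expansion $g(x)=g(0)+g'(0)x+o(x)$ as $|{\rm rem}(x)|\le\epsilon(|x|)|x|\le\epsilon'(t)\,t$, with $\epsilon'(t):=\sup_{|y|\le t}\epsilon(|y|)\to0$. The sign conventions are harmless, since only finiteness matters for the contradiction.
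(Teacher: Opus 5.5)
Your proof is correct and follows the paper's argument. You assume $\phi'(\pi/2)$ exists, deduce manifold differentiability of the zonal function $F$ at an equatorial point $\xi_0$ with $b=-\phi'(\pi/2)e$, and then contradict \eqref{eq:phi-r-minus-infty} via Lemma~\ref{lem:Poisson-boundary}. The differences are cosmetic: you differentiate along $v=e$ instead of $v=-e$, so $D_eU(r,\xi_0)=-\phi_r'(\pi/2)\to+\infty$, and you get the uniform little-$o$ from the single scalar $x=\sin t\,\ip{e}{u}$ with $|x|\le t$ rather than through the paper's $s(t,u)$.
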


\begin{proof}
Membership in $\Psi_2$ was established above. Suppose, to the
contrary, that $\phi'(\pi/2)$ exists.

\smallskip
\noindent\emph{Step 1: Geometry at the equator.}
Choose $\xi_0\in\Sph^2$ with $\ip{e}{\xi_0}=0$ and set
$v:=-e\in T_{\xi_0}\Sph^2$. The map
$g(\xi):=\arccos\ip{e}{\xi}$ is smooth near $\xi_0$, and
\begin{equation*}
  D_vg(\xi_0)
  =-\frac{\ip{e}{v}}
  {\sqrt{1-\ip{e}{\xi_0}^2}}
  =1.
\end{equation*}
Write $\eta=\cos t\,\xi_0+\sin t\,u$, where
$u\in T_{\xi_0}\Sph^2$ and $\norm u=1$. Since
$\ip{e}{\xi_0}=0$, uniformly in $u$,
\begin{equation*}
  s(t,u):=g(\eta)-\frac\pi2
  =\arccos\bigl(\sin t\,\ip{e}{u}\bigr)-\frac\pi2
  =-t\ip{e}{u}+O(t^3).
\end{equation*}
Indeed, $\arccos z=\pi/2-z+O(z^3)$ as $z\to0$ and
$\sin t=t+O(t^3)$. Since $|\ip{e}{u}|\le1$, both remainders are
uniform over the unit vectors $u\in T_{\xi_0}\Sph^2$. In particular,
$|s(t,u)|\le Ct$ for all sufficiently small $t$, with $C$ independent
of $u$.

\smallskip
\noindent\emph{Step 2: Differentiability of the zonal function.}
The assumed existence of $\phi'(\pi/2)$ gives
\begin{equation*}
  \phi\left(\frac\pi2+s\right)
  =\phi\left(\frac\pi2\right)
  +\phi'\left(\frac\pi2\right)s+o(|s|).
\end{equation*}
Write the remainder as $R(s)$. Thus $R(s)/|s|\to0$ as $s\to0$.
After defining this quotient to be zero at $s=0$, the preceding
uniform bound on $s(t,u)$ gives
\begin{equation*}
  \sup_{\substack{u\in T_{\xi_0}\Sph^2\\ \norm u=1}}
  \frac{|R(s(t,u))|}{t}
  \le C\sup_{|y|\le Ct}\frac{|R(y)|}{|y|}
  \longrightarrow0.
\end{equation*}
Using $F(\eta)=\phi(g(\eta))$, the composition can now be expanded
uniformly in $u$:
\begin{align*}
  F(\eta)-F(\xi_0)
  &=\phi'\left(\frac\pi2\right)s(t,u)+R(s(t,u))\\
  &=-t\phi'\left(\frac\pi2\right)\ip{e}{u}
    +O(t^3)+o(t)\\
  &=t\ip{-\phi'(\pi/2)e}{u}+o(t).
\end{align*}
Thus $F$ is differentiable at $\xi_0$ in the sense of
Lemma~\ref{lem:Poisson-boundary}, with
\begin{equation*}
  b:=-\phi'\left(\frac\pi2\right)e,
  \qquad
  D_vF(\xi_0)=\ip{b}{v}=\phi'\left(\frac\pi2\right).
\end{equation*}

\smallskip
\noindent\emph{Step 3: Contradiction with the Poisson means.}
By \eqref{eq:U-zonal} and the chain rule,
\begin{align*}
  D_vU(r,\xi_0)
  &=\phi_r'(g(\xi_0))D_vg(\xi_0)
    =\phi_r'\left(\frac\pi2\right).
\end{align*}
All hypotheses of Lemma~\ref{lem:Poisson-boundary} are now satisfied.
It would make $\phi_r'(\pi/2)=D_vU(r,\xi_0)$ converge to the finite
number $D_vF(\xi_0)=\phi'(\pi/2)$, contradicting
\eqref{eq:phi-r-minus-infty}.
\end{proof}

\subsection{A strictly positive definite modification}

The Schoenberg sequence of $\phi$ is supported on one residue class, so $\phi$ is not strictly positive definite. For $0<s<1$, define
\begin{equation*}
  q_s(\theta)
  :=(1-s)\sum_{n=0}^\infty s^nP_n(\cos\theta)
  =\frac{1-s}{\sqrt{1-2s\cos\theta+s^2}}.
\end{equation*}
The second equality is the Legendre generating function~\cite[Eq.~18.12.11]{DLMF}. Thus $q_s$ is real analytic on a neighborhood of $[0,\pi]$ and belongs to $\Psi_2^+$, with every Schoenberg coefficient positive. For $0<\epsilon<1$, define
\begin{equation*}
  \phi_*(\theta):=(1-\epsilon)\phi(\theta)+\epsilon q_s(\theta).
\end{equation*}
Then $\phi_*\in\Psi_2^+$, every Schoenberg coefficient of $\phi_*$ is positive, and $\phi_*'(\pi/2)$ does not exist; otherwise subtraction of the analytic term would make $\phi'(\pi/2)$ exist.

\section{From dimension two to all even dimensions}\label{sec:even}

To obtain examples in all even dimensions, we first apply turning bands to the function $\phi_*$  in $\Psi_2$ and obtain an example in every even dimension. Iteration of the spherical mont\'ee then increases the differentiability at zero. Lemmas~\ref{lem:turning-singularity} and~\ref{lem:montee-singularity} show that the missing derivative remains absent at $\pi/2$ after each operation.

\subsection{The case \texorpdfstring{$k=0$}{k=0}}

Let $\beta:=(\beta_n)_{n\ge0}$ be the $2$-Schoenberg sequence of $\phi_*$; by construction,
\begin{equation*}
  \beta_n>0\qquad(n\ge0).
\end{equation*}
For $M\ge1$, define
\begin{equation*}
  \beta^{(M)}:=T^{M-1}\beta,
  \qquad
  F_M(\theta):=\Sd_{2M}(\beta^{(M)};\theta).
\end{equation*}
In particular, $\beta^{(1)}=\beta$ and $F_1=\phi_*$.
Let
\begin{equation*}
  B_M:=F_M(0)=\sum_{n=0}^\infty\beta_{n+M-1}>0,
  \qquad
  \chi_{2M}:=B_M^{-1}F_M.
\end{equation*}
Because all coefficients are positive, $\chi_{2M}\in\Psi_{2M}^+$.

The turning-bands identity becomes
\begin{equation*}
  F_M(\theta)
  =\beta_{M-1}+\cos\theta\,F_{M+1}(\theta)
  +\frac{\sin\theta}{2M}F_{M+1}'(\theta).
\end{equation*}

\begin{proposition}\label{prop:even-lift}
For every $M\ge1$,
\begin{equation*}
  F_M^{(M)}\left(\frac\pi2\right)
  \quad\text{does not exist}.
\end{equation*}
Equivalently, the same is true for $\chi_{2M}$.
\end{proposition}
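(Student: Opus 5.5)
We argue by induction on $M$, using the turning-bands identity together with Lemma~\ref{lem:turning-singularity}. The base case $M=1$ is the statement that $F_1'(\pi/2)=\phi_*'(\pi/2)$ does not exist. This was established at the end of Section~4: if $\phi_*'(\pi/2)$ existed, then subtracting the analytic term $\epsilon q_s$ would make $\phi'(\pi/2)$ exist, contradicting Proposition~\ref{prop:d2-seed}.

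For the inductive step, assume that $F_M^{(M)}(\pi/2)$ does not exist for some $M\ge1$. We apply Proposition~\ref{prop:turning-bands} with $d=2M$ and the sequence $\beta^{(M)}$. Its left shift is $T\beta^{(M)}=\beta^{(M+1)}$, so in the notation of that proposition $F=F_M$ and $G=F_{M+1}$. We then invoke Lemma~\ref{lem:turning-singularity} with $\theta_0=\pi/2$ and $r=M+1\ge2$. The lemma needs $G\in C^{r-1}=C^{M}$ near $\pi/2$.

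\emph{Regularity of $G$.} We have $F_{M+1}=B_{M+1}\chi_{2M+2}$, and $\chi_{2M+2}\in\Psi_{2M+2}$. Ziegel's interior regularity theorem (the case $k=0$ of Theorem~\ref{thm:main-smoothness}) then gives $F_{M+1}\in C^{\lfloor(2M+1)/2\rfloor}(0,\pi)=C^{M}(0,\pi)$, which is exactly the required order. Here $B_{M+1}>0$ because every $\beta_n$ is positive.

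\emph{Conclusion of the step.} With $r-1=M$, the induction hypothesis says that $F^{(r-1)}(\pi/2)=F_M^{(M)}(\pi/2)$ does not exist. Lemma~\ref{lem:turning-singularity} then yields that $G^{(r)}(\pi/2)=F_{M+1}^{(M+1)}(\pi/2)$ does not exist, which closes the induction. The statement for $\chi_{2M}$ follows because $\chi_{2M}$ differs from $F_M$ only by the positive factor $B_M^{-1}$.

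The only delicate point is matching indices: the regularity supplied by Ziegel's theorem in dimension $2M+2$ must be precisely $C^M$, which is what Lemma~\ref{lem:turning-singularity} requires. As computed above it is, so no further obstacle is expected.
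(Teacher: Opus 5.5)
Your proof is correct and is essentially the paper's argument. It has the same base case via the analytic perturbation, and it applies Lemma~\ref{lem:turning-singularity} in the same way to the pair $(F_{p-1},F_p)$ with $r=p$, using $F_p\in C^{p-1}(0,\pi)$ from Theorem~\ref{thm:main-smoothness}. The only difference is cosmetic: you run it as an upward induction, whereas the paper assumes $F_M^{(M)}(\pi/2)$ exists and follows the contrapositive chain down to $F_1'(\pi/2)$.
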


\begin{proof}
For $M=1$, $F_1=\phi_*$ and the result follows from Proposition~\ref{prop:d2-seed} and the analytic perturbation. Assume $M\ge2$.
For $1\le j\le M$, one has $F_j(0)>0$ and
$F_j/F_j(0)\in\Psi_{2j}$. Applying
Theorem~\ref{thm:main-smoothness} to the normalized function and then
multiplying by $F_j(0)$ gives
\begin{equation*}
  F_j\in C^{j-1}(0,\pi),
  \qquad j=1,\ldots,M.
\end{equation*}
Suppose that $F_M^{(M)}(\pi/2)$ existed. For each integer $p$ with $2\le p\le M$, apply Lemma~\ref{lem:turning-singularity} with
\begin{equation*}
  F:=F_{p-1},\qquad G:=F_p,\qquad r:=p.
\end{equation*}
Indeed, $\beta^{(p)}=T\beta^{(p-1)}$, and
\begin{equation*}
  F_{p-1}=\Sd_{2(p-1)}(\beta^{(p-1)};\cdot),
  \qquad
  F_p=\Sd_{2p}(T\beta^{(p-1)};\cdot).
\end{equation*}
Thus $(F_{p-1},F_p)$ is precisely the pair $(F,G)$ in
Proposition~\ref{prop:turning-bands} with $d=2(p-1)$.
The hypothesis $G\in C^{r-1}$ is exactly
$F_p\in C^{p-1}(0,\pi)$. The contrapositive of the lemma therefore
gives, for each $p$, the implication
$F_p^{(p)}(\pi/2)$ exists $\Longrightarrow$
$F_{p-1}^{(p-1)}(\pi/2)$ exists.
Starting from the contradictory assumption and taking
$p=M,M-1,\ldots,2$ gives the explicit chain
\begin{align*}
  F_M^{(M)}\left(\frac\pi2\right)\text{ exists}
  &\Longrightarrow
  F_{M-1}^{(M-1)}\left(\frac\pi2\right)\text{ exists}\\
  &\Longrightarrow\cdots\Longrightarrow
  F_1'\left(\frac\pi2\right)\text{ exists}.
\end{align*}
The last statement contradicts the base case. Finally,
$\chi_{2M}^{(M)}(\pi/2)=B_M^{-1}F_M^{(M)}(\pi/2)$ whenever either
derivative exists, so multiplication by $B_M^{-1}\ne0$ does not alter
the conclusion.
\end{proof}

For $d=2M$, Theorem~\ref{thm:main-smoothness} guarantees $C^{M-1}(0,\pi)$, whereas Proposition~\ref{prop:even-lift} shows that the derivative of order $M$ need not exist. Thus the bound for $k=0$ is optimal in every even dimension.

\subsection{Arbitrary even differentiability at the origin}

\begin{proof}[Proof of Theorem~\ref{thm:main-even}]
Fix $d=2m\ge2$ and $k\ge0$. Define
\begin{equation*}
  M:=m+k,
  \qquad
  D:=2M=d+2k.
\end{equation*}
Each application of $\IS$ lowers the dimension by two, raises the pole
regularity by two, and raises the order of the missing equatorial derivative
by one.
By its definition, $\chi_{2M}\in\Psi_D^+$ and every Schoenberg coefficient
of $\chi_{2M}$ is positive. Write $(\gamma_n)_{n\ge0}$ for its
$D$-Schoenberg sequence. Proposition~\ref{prop:even-lift} gives
\begin{equation}\label{eq:chi-missing}
  \chi_{2M}^{(M)}\left(\frac\pi2\right)
  \quad\text{does not exist}.
\end{equation}
\smallskip
\noindent\emph{Step 1: Construction and iteration.}
To iterate the mont\'ee using Proposition~\ref{prop:montee-positive}, the starting function must be nonnegative. Since $\chi_{2M}$ is continuous, choose
$C>\max\{0,-\min_{0\le\theta\le\pi}\chi_{2M}(\theta)\}$ and set
$H_0(\theta):=(\chi_{2M}(\theta)+C)/(1+C)$. If $(h_{0,n})_{n\ge0}$ is its
$D$-Schoenberg sequence, then
\begin{align*}
  &H_0(\theta)>0\ (0\le\theta\le\pi),\qquad H_0(0)=1,\\
  &h_{0,0}=\frac{\gamma_0+C}{1+C}>0,\qquad
   h_{0,n}=\frac{\gamma_n}{1+C}>0\quad(n\ge1),\\
  &H_0^{(M)}\left(\frac\pi2\right)\text{ exists}
   \Longrightarrow
   \chi_{2M}^{(M)}\left(\frac\pi2\right)
   =(1+C)H_0^{(M)}\left(\frac\pi2\right)\text{ exists}.
\end{align*}
The first two lines and the strictness criterion from Section~2.1 show that
$H_0\in\Psi_D^+$.
Thus \eqref{eq:chi-missing} remains true with $H_0$ in place of $\chi_{2M}$.

Define $H_{j+1}:=\IS H_j$ for $0\le j<k$, and put
$D_j:=D-2j=2(M-j)$. For $0\le j<k$,
$D_j=d+2(k-j)\ge4$.
We prove by induction that $H_j\in\Psi_{D_j}^+$, that $H_j\ge0$,
and that its Schoenberg coefficients are positive in all sufficiently
large degrees. These assertions hold for $j=0$. Suppose they hold for
some $j<k$. Proposition~\ref{prop:montee-positive} gives $A(H_j)>0$,
so $H_{j+1}$ is well defined. If $(h_{j,n})_{n\ge0}$ denotes the
$D_j$-Schoenberg sequence of $H_j$, Proposition~\ref{prop:montee-positive}
and \eqref{eq:montee-coeff} give
\begin{align*}
  &H_{j+1}\in\Psi_{D_j-2}^+=\Psi_{D_{j+1}}^+,\\
  &H_{j+1}(\theta)
   =\frac{\displaystyle\int_\theta^\pi\sin t\,H_j(t)\dd t}
   {A(H_j)}\ge0,\qquad H_{j+1}(0)=1,\\
  &h_{j+1,n}
   =\frac{(D_j-2)h_{j,n-1}}
   {n(n+D_j-3)A(H_j)},\qquad n\ge1.
\end{align*}
Thus $h_{j,n-1}>0$ for all sufficiently large $n$ implies
$h_{j+1,n}>0$ for all sufficiently large $n$. The induction therefore
yields
\begin{equation*}
  H_k\in\Psi_{D-2k}^+=\Psi_{2m}^+=\Psi_d^+.
\end{equation*}

\smallskip
\noindent\emph{Step 2: Regularity at the pole.}
For $0\le j\le k$, define the even continuation
\begin{equation*}
  H_{j,e}(t):=H_j(|t|),\qquad -\pi<t<\pi.
\end{equation*}
The function $H_{0,e}$ is continuous at $0$. For $0\le j<k$,
Lemma~\ref{lem:montee-pole}, in dimension $D_j$ with $q=j$, gives
\begin{align*}
  &H_{j,e}\text{ is $2j$ times differentiable at $0$}\\
  &\qquad\Longrightarrow
  H_{j+1,e}\text{ is $2j+2$ times differentiable at $0$}.
\end{align*}
Starting with $j=0$ and iterating this implication yields
$2j$-fold differentiability of $H_{j,e}$ at $0$ for every
$0\le j\le k$.
The assertion with $j=k$ is part~\textup{(i)}.

\smallskip
\noindent\emph{Step 3: Interior differentiability and sharpness.}
The calculation in Step~1 and \eqref{eq:chi-missing} show that
$H_0^{(M)}(\pi/2)$ does not exist. If
$H_j^{(M+j)}(\pi/2)$ does not exist, the construction gives
$A(H_j)>0$, and we apply
Lemma~\ref{lem:montee-singularity} with
$f:=H_j$, $r:=M+j$, and $\theta_0:=\pi/2$. Since
$H_{j+1}=\IS H_j$, the resulting chain is
\begin{align*}
  H_0^{(M)}\left(\frac\pi2\right)\text{ does not exist}
  &\Longrightarrow
  H_1^{(M+1)}\left(\frac\pi2\right)\text{ does not exist}\\
  &\Longrightarrow\cdots\Longrightarrow
  H_k^{(M+k)}\left(\frac\pi2\right)\text{ does not exist}.
\end{align*}
Since $M+k=m+2k$, the last statement proves part~\textup{(iii)}.

Theorem~\ref{thm:main-smoothness}, applied to $H_k\in\Psi_d$ and the pole regularity proved above, gives
\begin{equation*}
  H_k\in C^{2k+\lfloor(2m-1)/2\rfloor}(0,\pi)
  =C^{2k+m-1}(0,\pi),
\end{equation*}
which is part~\textup{(ii)}.

Suppose that the even continuation of $H_k$ were $2k+2$ times differentiable at zero. Theorem~\ref{thm:main-smoothness}, with $k+1$ in place of $k$, would give
\begin{align*}
  H_{k,e}\text{ is $2k+2$ times differentiable at $0$}
  &\Longrightarrow
  H_k\in C^{2k+2+\lfloor(2m-1)/2\rfloor}(0,\pi)\\
  &=C^{2k+m+1}(0,\pi)\\
  &\Longrightarrow
  H_k^{(2k+m)}\left(\frac\pi2\right)\text{ exists}.
\end{align*}
This contradicts part~\textup{(iii)} and proves part~\textup{(iv)}.

Finally, suppose that $H_k\in\Psi_{d+1}$. Part~\textup{(i)} supplies
the required $2k$-fold differentiability at zero. Hence
Theorem~\ref{thm:main-smoothness} in dimension $d+1=2m+1$ would give
\begin{align*}
  H_k\in\Psi_{d+1}
  &\Longrightarrow
  H_k\in C^{2k+\lfloor d/2\rfloor}(0,\pi)
   =C^{2k+m}(0,\pi)\\
  &\Longrightarrow
  H_k^{(2k+m)}\left(\frac\pi2\right)\text{ exists},
\end{align*}
again contradicting part~\textup{(iii)}. This proves part~\textup{(v)}.
Define $\psi_{d,k}:=H_k$ to complete the proof.
\end{proof}

\section{The optimal differentiability index}

For $d\ge1$ and $k\ge0$, let $r_*(d,k)$ be the largest $r\in\Nzero$ for which every $\psi\in\Psi_d$ whose even continuation has $2k$ derivatives at $0$ belongs to $C^r(0,\pi)$. The even-dimensional construction and the known odd-dimensional examples give the following formula.

\begin{corollary}
For every $d\ge1$ and $k\ge0$,
\begin{equation}\label{eq:optimal-index}
  r_*(d,k)=2k+\left\lfloor\frac{d-1}{2}\right\rfloor.
\end{equation}
More explicitly,
\begin{center}
\begin{tabular}{@{}ccc@{}}
\toprule
Dimension & Guaranteed order & First non-guaranteed derivative \\
\midrule
$d=2m+1$, $m\ge0$ & $2k+m$ & $2k+m+1$ \\
$d=2m$, $m\ge1$   & $2k+m-1$ & $2k+m$ \\
\bottomrule
\end{tabular}
\end{center}
In the even-dimensional construction the first non-guaranteed derivative
fails at the fixed point $\pi/2$. The first row includes the circle $d=1$
by taking $m=0$; the odd-dimensional examples are given in
Ziegel~\cite[Sec.~4]{Ziegel2014} and
Tr\"ubner--Ziegel~\cite[Sec.~4]{TrubnerZiegel2017}.
\end{corollary}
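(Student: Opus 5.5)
The identity \eqref{eq:optimal-index} splits into a lower bound and an upper bound on $r_*(d,k)$, and I will prove them separately.

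\emph{Lower bound.} Theorem~\ref{thm:main-smoothness} says that every $\psi\in\Psi_d$ whose even continuation is $2k$ times differentiable at $0$ lies in $C^{2k+\lfloor(d-1)/2\rfloor}(0,\pi)$. Hence $r=2k+\lfloor(d-1)/2\rfloor$ is admissible in the definition of $r_*(d,k)$. Admissibility is downward closed, since $C^{r}\subset C^{r'}$ for $r'\le r$, so $r_*(d,k)\ge 2k+\lfloor(d-1)/2\rfloor$.

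\emph{Upper bound.} It suffices to produce one $\psi\in\Psi_d$ whose even continuation is $2k$ times differentiable at $0$ but which is not in $C^{2k+\lfloor(d-1)/2\rfloor+1}(0,\pi)$. Because the admissible set is downward closed, failure at this order rules out every larger $r$ as well. I split by the parity of $d$.

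\emph{Even case, $d=2m$ with $m\ge1$.} Take $\psi_{d,k}$ from Theorem~\ref{thm:main-even}. Part (i) gives the hypothesis at zero. Part (iii) says $\psi_{d,k}^{(2k+m)}(\pi/2)$ does not exist. If $\psi_{d,k}$ were in $C^{2k+m}(0,\pi)$, this derivative would exist at every interior point, including $\pi/2$. Hence $\psi_{d,k}\notin C^{2k+m}(0,\pi)$. Since $2k+m=2k+\lfloor(d-1)/2\rfloor+1$, this matches the second row of the table.

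\emph{Odd case, $d=2m+1$ with $m\ge0$.} Here I cite the examples of Ziegel~\cite[Sec.~4]{Ziegel2014} and Tr\"ubner--Ziegel~\cite[Sec.~4]{TrubnerZiegel2017}. They start from a nondifferentiable function in $\Psi_1$ and raise the dimension by two using turning bands and mont\'ee, as in Section~\ref{sec:even}. The result is a function in $\Psi_{2m+1}$ that is $2k$ times differentiable at zero and fails to have a continuous derivative of order $2k+m+1$ on $(0,\pi)$. Because $\lfloor 2m/2\rfloor=m$, this matches the first row of the table, and the case $d=1$ is included with $m=0$.

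\emph{Main obstacle.} The only delicate point is the odd case, because the paper does not reprove it. I would check that the cited examples negate membership in $C^{2k+m+1}(0,\pi)$, which is all the upper bound needs. If they instead assert only pointwise nonexistence of a derivative, then the inclusion argument from the even case applies unchanged. Combining the lower and upper bounds gives the equality and the table.
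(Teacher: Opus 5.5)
Your proposal is correct and follows essentially the same route as the paper. The lower bound comes from Theorem~\ref{thm:main-smoothness}, the even-dimensional upper bound from parts (i) and (iii) of Theorem~\ref{thm:main-even}, and the odd-dimensional upper bound from the cited examples of Ziegel and Tr\"ubner--Ziegel; your remarks on downward closedness and on passing from nonexistence of the derivative at $\pi/2$ to non-membership in $C^{2k+m}(0,\pi)$ only spell out steps the paper leaves implicit.
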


\begin{proof}
The lower bound in \eqref{eq:optimal-index} is Theorem~\ref{thm:main-smoothness}. The upper bound in odd dimensions is due to Ziegel~\cite[Sec.~4]{Ziegel2014} for $k=0$ and Tr\"ubner--Ziegel~\cite[Sec.~4]{TrubnerZiegel2017} for general $k$. The upper bound in even dimensions is Theorem~\ref{thm:main-even}.
\end{proof}


\begin{thebibliography}{99}

\bibitem{BeatsonZuCastell2017}
R.~K. Beatson and W.~zu Castell,
\emph{Dimension hopping and families of strictly positive definite zonal basis functions on spheres},
J. Approx. Theory \textbf{221} (2017), 22--37.

\bibitem{ChenMenegattoSun2003}
D.~Chen, V.~A. Menegatto, and X.~Sun,
\emph{A necessary and sufficient condition for strictly positive definite functions on spheres},
Proc. Amer. Math. Soc. \textbf{131} (2003), 2733--2740.

\bibitem{DaiXu2013}
F.~Dai and Y.~Xu,
\emph{Approximation Theory and Harmonic Analysis on Spheres and Balls},
Springer Monographs in Mathematics, Springer, 2013.

\bibitem{DLMF}
NIST Digital Library of Mathematical Functions,
Release 1.2.7, \url{https://dlmf.nist.gov/}.

\bibitem{EmeryPeronPorcu2022}
X.~Emery, A.~P. Peron, and E.~Porcu,
\emph{Dimension walks on hyperspheres},
Comput. Appl. Math. \textbf{41} (2022), Paper No.~199.

\bibitem{Gneiting1999}
T.~Gneiting,
\emph{On the derivatives of radial positive definite functions},
J. Math. Anal. Appl. \textbf{236} (1999), 86--93.

\bibitem{Gneiting2013}
T.~Gneiting,
\emph{Strictly and non-strictly positive definite functions on spheres},
Bernoulli \textbf{19} (2013), 1327--1349.

\bibitem{Jager2019}
J.~J\"ager,
\emph{A note on the derivatives of isotropic positive definite functions on the Hilbert sphere},
SIGMA \textbf{15} (2019), 081, 7 pages.

\bibitem{Matheron1965}
G.~Matheron,
\emph{Les Variables R\'egionalis\'ees et Leur Estimation},
Masson et Cie, Paris, 1965.

\bibitem{Matheron1972}
G.~Matheron,
\emph{Quelques aspects de la mont\'ee},
Note G\'eostatistique 120, Centre de G\'eostatistique, Fontainebleau, France, 1972.

\bibitem{Schoenberg1942}
I.~J. Schoenberg,
\emph{Positive definite functions on spheres},
Duke Math. J. \textbf{9} (1942), 96--108.

\bibitem{TrubnerZiegel2017}
M.~Tr\"ubner and J.~F. Ziegel,
\emph{Derivatives of isotropic positive definite functions on spheres},
Proc. Amer. Math. Soc. \textbf{145} (2017), 3017--3031.

\bibitem{Ziegel2014}
J.~F. Ziegel,
\emph{Convolution roots and differentiability of isotropic positive definite functions on spheres},
Proc. Amer. Math. Soc. \textbf{142} (2014), 2063--2077.

\end{thebibliography}
\end{document}